\newtheorem{example}{Example}
\newtheorem{assumption}{Assumption}[section]
\title{Convolution quadratures based on block generalized Adams methods
\thanks{This work was supported by National Natural Science Foundation of China (No. 11901133) and Training Program of Guizhou University (No. GDPY[2020]39).}}
\author{Ling Liu\thanks{School of Mathematics and Statistics, Guizhou University, Guiyang, Guizhou
550025, P.R. China.}
        \and Junjie Ma\thanks{School of Mathematics and Statistics, Guizhou University, Guiyang, Guizhou
550025, P.R. China.({\tt jjma@gzu.edu.cn}).}
}
\begin{document}

\maketitle

\begin{abstract}
This paper studies a family of convolution quadratures, a numerical technique for efficient evaluation of convolution integrals.
We employ the block generalized Adams method to discretize the underlying initial value problem,
departing from the well-established approaches that rely on linear multistep formulas or Runge-Kutta methods.
The convergence order of the proposed convolution quadrature
can be dynamically controlled without requiring grid point adjustments, enhancing flexibility.
Through strategic selection of the local interpolation polynomial and block size, the method achieves high-order convergence for calculation of convolution integrals with hyperbolic kernels.
We provide a rigorous convergence analysis for the proposed convolution quadrature and numerically validate our theoretical findings for various convolution integrals.
\end{abstract}

\begin{keywords}
Convolution quadrature;
Convolution integral;
Block generalized Adams method;
Convergence analysis;
Integral equation.
\end{keywords}

\begin{AMS}
65D32, 65R10 	
\end{AMS}

\pagestyle{myheadings}
\thispagestyle{plain}

\section{Introduction}

Convolution integrals (CIs) have a number of
applications in computational practice, such as
time-domain boundary integral equations \cite{banjai2022integral},
Volterra integral equations of convolution-type \cite{brunner2017volterra},
fractional differential equations \cite{banjai2019efficient,jin2017correction,shi2023high} and
Maxwell's equations \cite{wang2008a}.
In this paper, we investigate the numerical evaluation of the following CI
\begin{align}
\label{CIi}
 \int_{0}^tk(s)g(t-s)ds~~\mathrm{with}~~t\in [0,T],
\end{align}
where $g$ is a given smooth function.
When the kernel function $k$ itself is known previously,
many efficient approaches can be used to
compute  CI \eqref{CIi},
for example,
the convolution spline method \cite{davis2013convolution} and
spectral method based on orthogonal polynomial convolution matrices
\cite{xu2017a,xu2018spectral}.
However, if the Laplace transform $K$ of the
kernel function $k$ is known,
then the convolution quadrature (CQ),
which was proposed by Lubich (see
\cite{lubich1988convolution1,lubich1988convolution2,lubich2004convolution}),
becomes an attractive choice due to its suitability for stable integration over long time intervals.

For clarity, we introduce the following notation to represent CI \eqref{CIi} (see also \cite{banjai2011an}),
\begin{align}
\label{CI}
(K(\partial_t)g)(t) := \int_{0}^tk(s)g(t-s)ds~~\mathrm{with}~~t\in [0,T].
\end{align}
Here we have $(\partial_t^{-1}g)(t)=\int_0^tg(s)ds$
and $(\partial_tg)(t) = g'(t)+g(0)\delta(t)$
with $\delta$ denoting Dirac delta function.
Noting that
\begin{align}
\label{LapK}
k(s)=\frac{1}{2\pi \mathrm{i}}\int_{\Gamma}K(\lambda)\mathrm{e}^{\lambda s}d\lambda~~\mathrm{with}~~s>0,
\end{align}
where $\Gamma$  is a carefully chosen contour that
falls in the analytic region
of $K,$
we can compute
\begin{align}
\label{CI1}
(K(\partial_t)g)(t)=\frac{1}{2\pi \mathrm{i}}\int_{\Gamma}K(\lambda)
\int_{0}^t\mathrm{e}^{\lambda s}g(t-s)dsd\lambda.
\end{align}
Letting $ y(t)=\int_{0}^t\mathrm{e}^{\lambda s}g(t-s)ds,$
we obtain the  initial value problem (IVP)
for the ordinary differential equation (ODE):
\begin{align}
\label{IVP}
\left\{
  \begin{array}{l}
   y'(t)=\lambda y(t)+g(t)~~\mathrm{with}~~t>0,  \\
   y(0)=0.
  \end{array}
\right.
\end{align}
The calculation of CI \eqref{CI} then consists of two steps: approximating IVP \eqref{IVP} using time-stepping methods, and computing
contour integrals.

The literature boasts extensive research on two primary types of ODE solvers for IVP \eqref{IVP}.
Lubich's pioneering work in
\cite{lubich1988convolution1,lubich1988convolution2}
established the foundation for the linear multistep convolution quadrature (LMCQ) by applying linear multistep formulas for discretization of IVP \eqref{IVP}.
Convergence analysis revealed that LMCQ achieves the same convergence order as the underlying linear multistep formula, but only after adding correction terms.
LMCQ is implemented on a uniform grid and offers the advantage of
adjusting its convergence order without modifying the grid points.
This implies changing the convergence order of LMCQ does not need to re-evaluate $g,$ since evaluating the function $g$ depends solely on the grid points.
This advantage extends to adaptive algorithms as well. When doubling the grid points, LMCQ can efficiently reuse the previously computed values of $g.$
These combined advantages make LMCQ a compelling choice for a wide range of time-space problems.
These problems often involve computationally expensive spatial discretizations, such as those encountered with time-fractional differential operators
(see \cite{cuesta2006convolution,jin2017correction}).
However, $A-$stable multistep methods are limited by Dalquist's order barrier to a maximum convergence order of $2.$
Due to this limitation, LMCQs are typically constructed using lower-order formulas,
like the second-order backward difference (BDF2) \cite{cuesta2006convolution}
or the trapezoid rule (TR) \cite{banjai2023generalized,eruslu2020polynomially},
to guarantee $A-$stability when applied to hyperbolic problems.
In contrast, Runge-Kutta methods can achieve $A-$stability for any desired order by strategically selecting non-uniform stage nodes, such as Gauss-Legendre, Radau or Lobatto points \cite{banjai2011an,banjai2022runge}.
Therefore, CQ based on Runge-Kutta method (RKCQ)
can provide arbitrarily high-order and stable
approximations to CI \eqref{CI} with a hyperbolic kernel (see \cite{banjai2011an}).
Benefitting from its excellent stability, high-order RKCQ offers a broader range of applications compared to LMCQ.
This is particularly advantageous for numerical studies of the wave equation
(see \cite{banjai2022integral,melenk2021on,nick2022time,rieder2022time}).
In addition, recent studies also demonstrate numerous modifications to the original CQs,
including fast implementation techniques \cite{banjai2010multistep,banjai2014fast,fischer2019fast},
variable time stepping strategies
\cite{banjai2023generalized,lopez2013generalized,lopez2016generalized}
and efficient computation of highly oscillatory integrals \cite{ren2024a}.

The aim of this paper is to develop
a family of CQs that combines the strengths of LMCQs and RKCQs:
It is implemented on the uniform grid and offers the flexibility to adjust  the convergence order without requiring additional grid points as LMCQ does;
The proposed method can achieve high-order and stable calculation of CI \eqref{CI} with a hyperbolic kernel as RKCQ does.
This is achieved by leveraging the block generalized Adams method (BGA) as the foundation for the underlying ODE solvers,
which draws inspiration from the application of the boundary value method in the numerical solutions of ODEs (see \cite{brugnano1998solving,iavernaro1998solving,iavernaro1999block}).
Henceforth,
we suppose that $K(\lambda)$ is analytic in the half-plane
\begin{align*}
\mathbb{C}_{\sigma}:=\{z\in \mathbb{C}: \mathrm{Re}(z)\geq \sigma\}
~~\mathrm{with}~~\sigma\in \mathbb{R},
\end{align*}
and  bounded as $|K(\lambda)|\leq M|\lambda|^{\mu}$
within $\mathbb{C}_{\sigma}$ for a positive constant $M$ and
a real number $\mu.$

The remainder of the paper is structured as follows.
Section 2 introduces  BGA for IVP \eqref{IVP}.
Building upon BGA, Section 3 develops the BGA-based convolution quadrature (BGACQ). We then analyze BGACQ's convergence properties under hypotheses of regularity.
Section 4 presents numerical experiments to verify the proposed quadrature rule's convergence and stability.

\section{BGA for IVP \eqref{IVP}}\label{Sec2}

This section introduces the ODE solver, BGA,  for IVP \eqref{IVP} and explores several BGAs  which can be employ in the formulation of CQ for CI with a hyperbolic  kernel.

To begin with, we define the uniform coarse grid, denoted by $\mathcal{X}_N,$
\begin{align*}
\mathcal{X}_N:=\{\tau_n:=nh,~n=0,1,\cdots,N\},
\end{align*}
where $h=T/N$ represents the grid step size.
Within each subinterval $[\tau_n,\tau_{n+1}],$
we introduce a uniform fine grid that plays a role analogous to the stages in a Runge-Kutta method,
\begin{align*}
\mathbf{X}_m^n:=\{t^n_j:=\tau_n+jh/m,~j=0,1,\cdots,m\}.
\end{align*}
It can be easily examined that $t^n_0=\tau_{n}$ and $t^n_m=\tau_{n+1}.$
For $j=0,1,\cdots,m-1,$ integration of both sides of
IVP \eqref{IVP} from $t^n_{j}$ to $t^n_{j+1}$
gives
\begin{align}
\label{IVP2}
y(t_{j+1}^n)-y(t_{j}^n)=\int_{t_{j}^n}^{t_{j+1}^n}
(\lambda y(s)+g(s))ds=\int_{t_{j}^n}^{t_{j+1}^n}
f(s)ds,
\end{align}
where $f(s):=\lambda y(s)+g(s).$

Next, define the piecewise polynomial space by
\begin{align*}
S_{m}(\mathbf{X}_m^n):=
\{v\in C[\tau_n,\tau_{n+1}]:v|_{[t_j^n,t_{j+1}^n]}\in \pi_{m},j=0,1,\cdots,m-1\},
\end{align*}
where $C[\tau_n,\tau_{n+1}]$ comprises continuous functions over the
interval $[\tau_n,\tau_{n+1}],$
and $v|_{[t_j^n,t_{j+1}^n]}\in \pi_{m}$ indicates $v$
is a polynomial with its degree not exceeding $m$
over the subinterval $[t_j^n,t_{j+1}^n].$
Given any nonnegative integers $k_1$ and $k_2,$
we further define the local fundamental functions
\begin{align*}
\phi_i^{k_1,k_2}(v)=
\prod_{l=-k_1,l\neq i}^{k_2+1}\frac{v-l}{i-l}~~\mathrm{with}~~i=-k_1,\cdots,k_2+1.
\end{align*}
Then we denote $\mathcal{P}_m^n$ as an interpolation operator acting on functions in $C[\tau_n,\tau_{n+1}],$
mapping them to the space of $S_{m}(\mathbf{X}_m^n).$
When $s\in [t_j^n,t_{j+1}^n]$ with $j=k_1,k_1+1,\cdots,m-k_2-1,$
we define $\mathcal{P}_m^n[f]$ as
\begin{align*}
 \mathcal{P}_m^n[f](s)= \mathcal{P}_m^n[f](t_{j}^n+vh/m):=\sum_{i=-k_1}^{k_2+1}
f(t_{j+i}^n)\phi_i^{k_1,k_2}(v), ~~v\in [0,1].
\end{align*}
To ensure the solvability of the resulting discretization, we employ an ``additional method" over the initial interval
$[t_0^n,t_{k_1}^n]$
and the final interval $[t^n_{m-k_2},t^n_m],$
similar to the approach used in block boundary value methods
(see \cite[pp.283]{brugnano1998solving}).
When $s\in [t_0^n,t_{k_1}^n],$ we define $\mathcal{P}_m^n[f]$ as
\begin{align*}
 \mathcal{P}_m^n[f](s)=\mathcal{P}_m^n[f](t_{k_1}^n+vh/m):=
\sum_{i=-k_1}^{k_2+1}
f(t_{k_1+i}^n)\phi_i^{k_1,k_2}(v),
\end{align*}
where $v\in [-k_1,0].$
When $s\in [t^n_{m-k_2},t^n_m],$ we define $\mathcal{P}_m^n[f]$ as
\begin{align*}
 \mathcal{P}_m^n[f](s)=\mathcal{P}_m^n[f](t_{m-k_2-1}^n+vh/m):=
\sum_{i=-k_1}^{k_2+1}
f(t_{m-k_2-1+i}^n)\phi_i^{k_1,k_2}(v),
\end{align*}
where $v\in [1,k_2+1].$
For $i=-k_1,\cdots,k_2+1,$
define the notations
\begin{align*}
\alpha_i:=&\frac{1}{m}\int_0^1\phi_i^{k_1,k_2}(v)dv,
\\
\alpha_i^{(j)}:=&\frac{1}{m}\left\{
                   \begin{array}{ll}
                     \int_{j-k_1}^{j-k_1+1}\phi_i^{k_1,k_2}(v)dv,&
j=0,\cdots,k_1-1, \\
                     \int_{j-m+k_2+1}^{j-m+k_2+2}\phi_i^{k_1,k_2}(v)dv,&
j=m-k_2,\cdots,m-1.
                   \end{array}
                 \right.
\end{align*}
This yields the quadrature rule $\mathrm{Q}_j^n[f]:= \int_{t_j^n}^{t_{j+1}^n}\mathcal{P}_m^n[f](s)ds$ for the integral
$\int_{t_j^n}^{t_{j+1}^n}f(s)ds,$
that is,
\begin{align}
\label{Qerr}
\mathrm{Q}_j^n[f]=h\sum_{i=-k_1}^{k_2+1}
\left\{
  \begin{array}{ll}
f(t_{k_1+i}^n)
\alpha_i^{(j)} ,&j=0,\cdots,k_1-1,\\
f(t_{j+i}^n)
\alpha_i,&j=k_1,\cdots,m-k_2-1,\\
f(t_{m-k_2-1+i}^n)
\alpha_i^{(j)},&j=m-k_2,\cdots,m-1.
  \end{array}
\right.
\end{align}

Before delving into the discretization of IVP \eqref{IVP}, let's examine the quadrature error $q^n_j$ defined by
\begin{align*}
q^n_j:=\int_{t_j^n}^{t_{j+1}^n}(\mathcal{I}-\mathcal{P}_m^n)[f](s)ds,
\end{align*}
where $\mathcal{I}$ denotes the  identity transformation.
In fact, the Peano kernel theorem \cite[pp. 43]{brunner2004collocation} allows us to quantify the difference between the exact function $f$ and its approximate piecewise polynomial $\mathcal{P}_m^n[f]$ over the subinterval $[t^n_j,t^n_{j+1}].$
Denote
\begin{align*}
\mathcal{E}^n_{j}(s):= \int_{-k_1}^{k_2+1}\kappa(s,v)
   f^{(k_1+k_2+2)}(t^n_{j}+vh/m)dv,  j=k_1,\cdots,m-k_2-1,
\end{align*}
where $f^{(k_1+k_2+2)}(t^n_{j}+vh/m)$ represents the $(k_1+k_2+2)-$th order derivative of
$f(t)$ at $t^n_{j}+vh/m,$ and the kernel $\kappa(s,v)$ is defined by
\begin{align*}
\kappa(s,v):=\frac{1}{(p-1)!}\left((s-v)_+^{k_1+k_2+1}
-\sum_{i=-k_1}^{k_2+1}\phi_i^{k_1,k_2}(s)(i-v)_+^{k_1+k_2+1}\right)
\end{align*}
with $(s-v)_+:=0$ for $s<v$ and
$(s-v)_+:=s-v$ for $s\geq v.$
In the case of $j = 0,\cdots,k_1-1,$
letting $t=t^n_{k_1}+sh/m,$
we   represent the remainder $(\mathcal{I}-\mathcal{P}_m^n)[f](t)$ in the form of
\begin{align*}
(\mathcal{I}-\mathcal{P}_m^n)[f](t^n_{k_1}+sh/m)
=\frac{h^{k_1+k_2+2}}{m^{k_1+k_2+2}}
\mathcal{E}^n_{k_1}(s), ~~ s\in [j-k_1,j-k_1+1].
\end{align*}
When $t\in [t^n_j,t^n_{j+1}]$ with $j=k_1,\cdots,m-k_2-1,$
letting $t=t^n_{j}+sh/m,$
we  represent $(\mathcal{I}-\mathcal{P}_m^n)[f](t)$ by
\begin{align*}
(\mathcal{I}-\mathcal{P}_m^n)[f](t^n_{j}+sh/m)
=\frac{h^{k_1+k_2+2}}{m^{k_1+k_2+2}}
\mathcal{E}^n_{j}(s), ~~ s\in [0,1].
\end{align*}
In the case of $j=m-k_2,\cdots,m-1,$
letting $t=t^n_{m-k_2-1}+sh/m,$
for $s\in [j-m+k_2+1,j-m+k_2+2],$
the approximation error is represented by
\begin{align*}
(\mathcal{I}-\mathcal{P}_m^n)[f](t^n_{m-k_2-1}+sh/m)
=\frac{h^{k_1+k_2+2}}{m^{k_1+k_2+2}}
\mathcal{E}^n_{m-k_2-1}(s).
\end{align*}
Consequently, the quadrature error $q^n_j$
is expressed as
\begin{align}
\label{Quaderr}
q^n_j =
\frac{h^{k_1+k_2+3}}{m^{k_1+k_2+3}}
 \left\{
  \begin{array}{ll}
   \int_{j-k_1}^{j-k_1+1}\mathcal{E}^n_{j}(s)ds, &j=0,\cdots,k_1-1, \\
   \int_{0}^{1}\mathcal{E}^n_{j}(s)ds, &j=k_1,\cdots,m-k_2-1, \\
    \int_{j-m+k_2+1}^{j-m+k_2+2}\mathcal{E}^n_{j}(s)ds, &j=m-k_2,\cdots,m-1.
  \end{array}
\right.
\end{align}

We let the curve $\Gamma$ to be $\sigma+\mathrm{i}\mathbb{R}$ and
introduce the assumption on the function $g:$
\begin{align*}
g(0) = g'(0)=\cdots=g^{(k_1+k_2+1)}(0)=0.
\end{align*}
This assumption will hold throughout the following
discussion unless explicitly stated otherwise.
Noting that
\begin{align*}
y(t)=\int_0^t\mathrm{e}^{\lambda s}g(t-s)ds~~\mathrm{and}~~
f(t)=\lambda y(t)+g(t),
\end{align*}
we obtain through a straightforward calculation
\begin{align}\label{fasy1}
f^{(k_1+k_2+2)}(t)=\lambda\int_0^tg^{(k_1+k_2+2)}(t-s)\mathrm{e}^{\lambda s}ds+ g^{(k_1+k_2+2)}(t).
\end{align}
Since $\int_0^tg^{(k_1+k_2+2)}(t-s)\mathrm{e}^{\lambda s}ds$
becomes highly oscillatory  when $\lambda\in \Gamma$ and $|\lambda|>1,$
we derive according to  van der Corput lemma \cite[pp.332]{stein1993harm},
\begin{align*}
\|f^{(k_1+k_2+2)}\|\leq C~~\mathrm{for~~any}~~
\lambda\in \Gamma,
\end{align*}
where $\|\cdot\|$ denotes the
maximum norm.
Throughout the following analysis, $C$ represents a generic positive constant that does not depend on $\lambda$ or $h.$
It follows from Eq. \eqref{Quaderr} that
\begin{align*}
|q_j^n|\leq Ch^{k_1+k_2+3}
~~\mathrm{for~~any}~~\lambda\in\Gamma.
\end{align*}

\begin{lemma}\label{AdaErr}
Assume the function $g$ in CI \eqref{CI} possesses a continuous derivative of order at least $k_1+k_2+2$
over $[0,T],$
and satisfies the condition
\begin{align*}
g(0) = g'(0)=\cdots=g^{(k_1+k_2+1)}(0)=0.
\end{align*}
Then the  error $q^n_j$
generated by the quadrature rule defined in Eq. \eqref{Qerr} satisfies
\begin{align*}
|q^n_j|\leq Ch^{k_1+k_2+3},~~ as~h \rightarrow 0.
\end{align*}
\end{lemma}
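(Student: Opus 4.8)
The plan is to obtain the estimate directly by assembling the ingredients already prepared in the text preceding the statement, since the representation \eqref{Quaderr} has conveniently isolated the explicit power $h^{k_1+k_2+3}$ (together with the factor $m^{-(k_1+k_2+3)}$). Because the block size $m$ and the local indices $k_1,k_2$ are fixed integers, the factor $m^{-(k_1+k_2+3)}$ is a harmless constant that I would simply absorb into $C$. Thus the entire task reduces to showing that each of the three integrals $\int \mathcal{E}^n_{j}(s)\,ds$ occurring in \eqref{Quaderr} is bounded \emph{uniformly} in $n$, in $j$, and — this is the essential point — in $\lambda\in\Gamma$.

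First I would dispose of the Peano kernel. The function $\kappa(s,v)$ is built solely from the fixed fundamental polynomials $\phi_i^{k_1,k_2}$ and the truncated powers $(s-v)_+^{k_1+k_2+1}$ on a reference configuration of nodes; it depends on neither $h$ nor $\lambda$, so $|\kappa(s,v)|\le C$ on the relevant fixed domain. Integrating over the fixed-length ranges of $v$ and $s$ then yields $\bigl|\int \mathcal{E}^n_{j}(s)\,ds\bigr|\le C\,\|f^{(k_1+k_2+2)}\|$, which reduces everything to a single estimate of the top-order derivative of $f$ in the maximum norm.

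The heart of the argument, and the step I expect to be the main obstacle, is the bound $\|f^{(k_1+k_2+2)}\|\le C$ valid for every $\lambda\in\Gamma$. Here the representation \eqref{fasy1} is decisive: via the recurrence $f^{(n)}=\lambda f^{(n-1)}+g^{(n)}$ and repeated integration by parts, the vanishing conditions $g(0)=\cdots=g^{(k_1+k_2+1)}(0)=0$ are exactly what force the boundary terms to cancel, leaving a single factor $\lambda$ in front of the convolution rather than the much higher power of $\lambda$ one would naively encounter. The additive term $g^{(k_1+k_2+2)}(t)$ is bounded on $[0,T]$ by the regularity hypothesis. For the remaining term $\lambda\int_0^t g^{(k_1+k_2+2)}(t-s)\mathrm{e}^{\lambda s}\,ds$, I would split according to $|\lambda|$: for $|\lambda|\le 1$ a crude estimate using $|\mathrm{e}^{\lambda s}|=\mathrm{e}^{\sigma s}\le \mathrm{e}^{\sigma T}$ suffices, while for $|\lambda|>1$, writing $\lambda=\sigma+\mathrm{i}\omega$ with $|\omega|\sim|\lambda|$, the integral is genuinely oscillatory with an amplitude of bounded variation (guaranteed by the smoothness of $g$). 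The van der Corput lemma then furnishes the decay of order $1/|\omega|=O(1/|\lambda|)$, which precisely cancels the prefactor $\lambda$. The delicate point is that this decay and its implied constant must not deteriorate as $|\lambda|\to\infty$ along $\Gamma$; this is the only place where the choice of contour and the oscillatory structure really enter.

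Finally I would collect the pieces. Substituting $\|f^{(k_1+k_2+2)}\|\le C$ into the Peano-kernel estimate gives $|\mathcal{E}^n_{j}(s)|\le C$ uniformly, so each integral in \eqref{Quaderr} is $O(1)$, and multiplying by the explicit factor $h^{k_1+k_2+3}/m^{k_1+k_2+3}$ produces $|q^n_j|\le C h^{k_1+k_2+3}$ as $h\to 0$, with $C$ independent of $\lambda$ as required. In summary, the genuine analytic content is confined to the oscillatory-integral estimate; everything else is careful bookkeeping of fixed, $h$-independent constants.
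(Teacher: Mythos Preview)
Your proposal is correct and follows essentially the same route as the paper: the text preceding the lemma already derives \eqref{Quaderr}, establishes \eqref{fasy1} from the vanishing initial data of $g$, and invokes the van der Corput lemma to bound $\|f^{(k_1+k_2+2)}\|$ uniformly in $\lambda\in\Gamma$, after which the estimate is immediate. Your write-up is in fact more explicit than the paper's on two points---the boundedness of the Peano kernel $\kappa$ on the reference configuration, and the split $|\lambda|\le 1$ versus $|\lambda|>1$---but the logical skeleton is identical.
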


We now turn our attention to developing   BGA  for IVP \eqref{IVP}.
Denote the matrices $\tilde{\mathbf{A}}$ and $\tilde{\mathbf{L}}$ of dimension $m\times (m+1)$
\begin{align*}
\tilde{\mathbf{A}}
:=&
\left(
  \begin{array}{c|ccccccc}
    \alpha^{(0)}_{-k_1} & \alpha^{(0)}_{-k_1+1} & \cdots & \alpha^{(0)}_{-k_2+1} &  &  &  &  \\
    \vdots &\vdots &   & \vdots &   &  &   &   \\
    \alpha^{(k_1-1)}_{-k_1} & \alpha^{(k_1-1)}_{-k_1+1} & \cdots & \alpha^{(k_1-1)}_{-k_2+1} &  &  &  &   \\
    \alpha_{-k_1} &  \alpha_{-k_1+1} & \cdots &  \alpha_{k_2+1} &   &   &  &  \\
     & \alpha_{-k_1} &  \cdots & \alpha_{k_2} &\alpha_{k_2+1}  &   &  &   \\
      &   & \ddots  & \ddots  &  \ddots & \ddots  &  &   \\
    &  &  & \ddots  &  \ddots &\ddots   & \ddots  &   \\
      &   &   &   &   & \alpha_{-k_1} &  \cdots & \alpha_{k_2+1} \\
      &   &   &   &   & \alpha_{-k_1}^{(m-k_2)} &  \cdots & \alpha_{k_2+1}^{(m-k_2)} \\
 &   &   &   &   & \vdots &    & \vdots\\
    &   &   &   &   & \alpha_{-k_1}^{(m-1)} &  \cdots & \alpha_{k_2+1}^{(m-1)} \\
  \end{array}
\right)
\\
:=&[\mathbf{a}~|~\mathbf{A}]
\end{align*}
and
\begin{align*}
\tilde{\mathbf{L}}:=
\left(
  \begin{array}{c|cccccc}
 -1& 1  &   &   &   &   &  \\
 & -1  &  1 &   &   &   &  \\
 &   &  \ddots & \ddots  &   &    & \\
 &   &   &   & \ddots  &  \ddots&  \\
 &   &   &   &   &  -1&1 \\
  \end{array}
\right):=[\mathbf{l}~|~\mathbf{L}].
\end{align*}
BGA with block size $m$  discretizes Eq. \eqref{IVP2} into
\begin{align*}
y_m^{n-1}\mathbf{l}+\mathbf{L}\mathbf{Y}_{n}=
h\lambda y_m^{n-1}\mathbf{a}+h\lambda \mathbf{A}\mathbf{Y}_{n}
+hg_m^{n-1}\mathbf{a}+h\mathbf{A}\mathbf{G}_{n},
~n=0,\cdots,N-1,
\end{align*}
or equivalently,
\begin{align}
\label{IVPnum}
(\mathbf{L}-h\lambda \mathbf{A})\mathbf{Y}_{n}
=(h\lambda \mathbf{a}-\mathbf{l})y_m^{n-1}+hg_m^{n-1}\mathbf{a}
+h\mathbf{A}\mathbf{G}_{n},~n=0,\cdots,N-1,
\end{align}
where $\mathbf{G}_{n}:=(g(t_1^{n})~~\cdots~~g(t_m^{n}))^T,$ $\mathbf{Y}_{n}:=(y^{n}_1~~\cdots~~y^{n}_m)^T$ denotes
the approximation to
$\mathbf{Y}_{n}^{ref}:=(y(t_1^{n})~~\cdots~~y(t_m^{n}))^T,$
 and $y_m^{-1}=0, ~g_m^{-1}=g(0).$
Noting that
\begin{align*}
y_m^n=  \mathbf{e}_m^{T}(\mathbf{L}-h\lambda \mathbf{A})^{-1}
(h\lambda \mathbf{a}-\mathbf{l})y_m^{n-1}
+\mathbf{e}_m^{T}(\mathbf{L}-h\lambda \mathbf{A})^{-1}
(hg_m^{n-1}\mathbf{a}
+h\mathbf{A}\mathbf{G}_{n}),
\end{align*}
where $\mathbf{e}_i$ represents a vector of dimension $m\times 1$ with all entries zero except for the $i-$th entry being $1,$
we define the stability function, $\mathrm{R}_{m}(z),$ for BGA by
\begin{align*}
\mathrm{R}_{m}(z)=\mathbf{e}_m^{T}(\mathbf{L}-z \mathbf{A})^{-1}
(z \mathbf{a}-\mathbf{l}).
\end{align*}
Let
\begin{align*}
\mathbb{C}^{-}:=\{z\in \mathbb{C}:\mathrm{Re}(z)\leq 0\},
~~\mathbb{C}^{+}:=\mathbb{C}\setminus \mathbb{C}^{-}.
\end{align*}
To ensure the resulting CQ can handle CI \eqref{CI} with a hyperbolic kernel,
we introduce the following assumption regarding the ODE solver.
\begin{assumption}
\label{ASS}
Suppose the stability function of BGA is defined as $\mathrm{R}_{m}(z)=\mathbf{e}_m^{T}(\mathbf{L}-z \mathbf{A})^{-1}
(z \mathbf{a}-\mathbf{l}),$
and the following statement holds true:
\begin{itemize}
  \item $\mathbf{A}$ is invertible, and all eigenvalues of $\mathbf{A}^{-1}\mathbf{L}$
    are positive;
  \item $|\mathrm{R}_{m}(\mathrm{i}\omega)|<1,~~\omega\neq 0;$
\item $|\mathrm{R}_{m}(\infty)|=|\mathbf{e}_m^T\mathbf{A}^{-1}\mathbf{a}|<1.$
\end{itemize}
\end{assumption}

The first condition implies that $\mathbf{L}-z\mathbf{A}$ is invertible for all
$z\in \mathbb{C}^-.$
Consequently, $\mathrm{R}_{m}(z)$ is analytic within this half-plane.
Since $\mathrm{R}_{m}(0)=1,$
the second condition guarantees that   $|\mathrm{R}_{m}(z)|\leq 1$
holds for all  $z\in \mathbb{C}^-$ according to
the maximum-modulus principle.
Therefore, these first two conditions together ensure that the corresponding BGA is $A-$stable.
From a numerical implementation standpoint, the second condition can be relaxed to
$|\mathrm{R}_{m}(\mathrm{i}\omega)|\leq 1$ for $\omega\in \mathbb{R}.$
The modified second condition and the third condition play a twofold role:
they simplify the convergence order proof and ensure the corresponding ODE solver can solve stiff problems more efficiently compared to solvers where$|\mathrm{R}_{m}(\infty)|=1.$
Table \ref{Eig} summarizes  the
approximate eigenvalues of
$\mathbf{A}^{-1}\mathbf{L}$ for several BGAs with $m^*=k_1+k_2+2.$
The numerical experiments suggest that all eigenvalues of
 $\mathbf{A}^{-1}\mathbf{L}$
seem to have positive real parts in these cases.
In Figure \ref{SF1}, we plot the stability boundaries for BGAs.
These boundaries represent the curves where $|\mathrm{R}_{m}(z)|=1.$
Because $\mathrm{R}_{m}(z)$
is analytic outside these boundaries,
the maximum-modulus principle guarantees that its modulus remains less than or equal to $1$ within that region.
Based on the numerical results, it seems that
 these schemes are $A-$stable.
Table \ref{Infinity} presents the limits, $|\mathrm{R}_{m}(\infty)|,$
for these $A-$stable methods.
As the table shows, all limits are strictly less than $1.$
Numerical experiments also confirm that this limit approaches $0$ as
$m$ increases.
While arbitrary choices of $k_1,k_2$ and $m$
don't generally guarantee that BGA satisfies Assumption \ref{ASS},
we can identify a minimum block size numerically to ensure this property holds.
We list the minimum block size $m^*$ for several BGAs satisfying
Assumption \ref{ASS} in Table \ref{minisize},
and will not explore the broader range of BGAs that might also satisfy
this assumption.

\begin{table}
\tabcolsep 0pt \caption{Numerical approximations to eigenvalues of $\mathbf{A}^{-1}\mathbf{L}$.} \label{Eig}
\vspace*{-10pt}
\begin{center}
\def\temptablewidth{1\textwidth}
{\rule{\temptablewidth}{0.8 pt}}
\begin{tabular*}{\temptablewidth}{@{\extracolsep{\fill}}ccc}
\hline
                & $m=m^*$ &$m=12$
                \\ \hline
&   & 19 $\pm$ 2.1i, 18 $\pm$ 5.9i,\\
     $(k_1,k_2)=(0,1)$&    5.5, ~3.6 $\pm$ 2.8i &   11 $\pm$ 12i, 12$\pm$ 11i, \\
         &    & 16 $\pm$ 8.7i, 13 $\pm$ 11i
                \\ \hline
&                &  5.7 $\pm$ 14i, ~7.6 $\pm$ 14i,
\\
  $(k_1,k_2)=(0,2)$    &
                5.7 $\pm$ 1.1i, ~3.3 $\pm$ 4.9i, &10 $\pm$ 12i, ~12 $\pm$ 8.4i,\\
&&               13 $\pm$ 4.4i, ~13 $\pm$ 1.0i
                \\ \hline
&&  8.2 $\pm$ 17i, ~10 $\pm$ 15i,\\
           $(k_1,k_2)=(1,2)$      & 3.7 $\pm$ 7.0i, ~7.8, ~6.3 $\pm$ 2.9i
                 &13 $\pm$ 13i, ~15 $\pm$ 8.7i,\\
       &&     15 $\pm$ 5.1i, ~16 $\pm$ 1.8i
                \\ \hline
       \end{tabular*}
       {\rule{\temptablewidth}{1 pt}}
       \end{center}
       \end{table}

\begin{figure}
\begin{center}
\includegraphics[width=12cm,height=6cm]{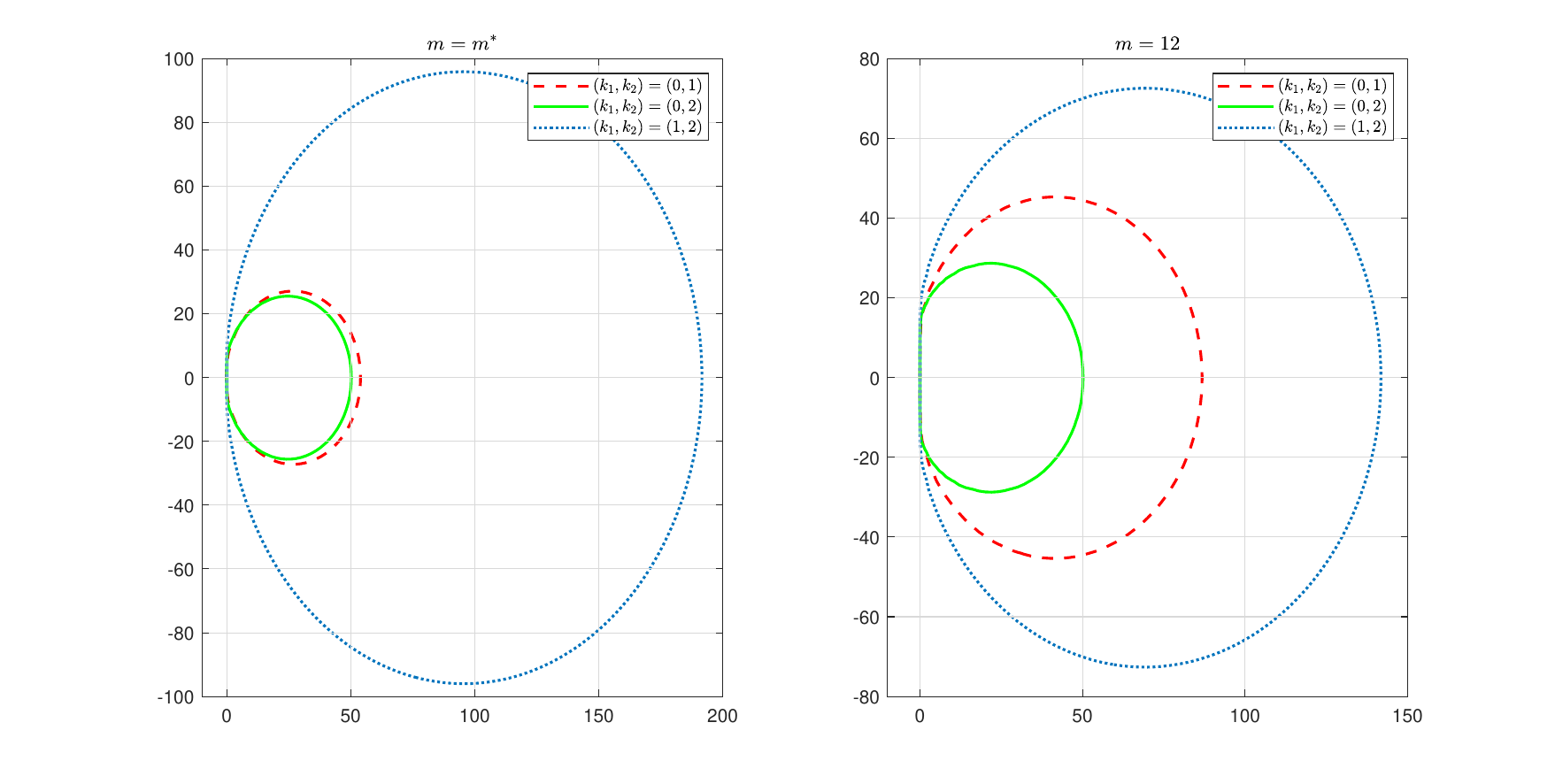}
\caption{Boundaries of the stability regions
 for  BGAs with $m=m^*$ (left) and $m=12$ (right).}\label{SF1}
\end{center}
\end{figure}

\begin{table}
\tabcolsep 0pt \caption{$|\mathrm{R}_{m}(\infty)|$ for various $A-$stable BGAs. }\label{Infinity}
\vspace*{-10pt}
\begin{center}
\def\temptablewidth{1\textwidth}
{\rule{\temptablewidth}{0.8pt}}
\begin{tabular*}{\temptablewidth}{@{\extracolsep{\fill}}ccccc}
\hline

        $m$        & $ m^*$   &$ 12$  & $ 24$   &$48$
                \\ \hline
                $(k_1,k_2)=(0,1)$
                &$5.9\times 10^{-1}$   &$4.5\times 10^{-3}$   &$6.9\times 10^{-6}$   &$1.6\times 10^{-11}$
\\
                $(k_1,k_2)=(0,2)$
                 &$4.3\times 10^{-2}$   &$4.4\times 10^{-4}$    &$1.4\times 10^{-8}$   &$1.5\times 10^{-17}$
                \\
                $(k_1,k_2)=(1,2)$
                &$7.4\times 10^{-1}$   &$7.6\times 10^{-2}$   &$1.5\times 10^{-3}$  &$6.3\times 10^{-7}$
                 \\ \hline
\end{tabular*}
{\rule{\temptablewidth}{1pt}}
\end{center}
\end{table}

\begin{table}
\tabcolsep 0pt \caption{Minimum block size $m^*$ for BGA satisfying Assumption \ref{ASS}. }\label{minisize}
\vspace*{-10pt}
\begin{center}
\def\temptablewidth{1\textwidth}
{\rule{\temptablewidth}{0.8pt}}
\begin{tabular*}{\temptablewidth}{@{\extracolsep{\fill}}ccccccccc}
\hline

        $(k_1,k_2)$    & $(0,1)$   &$(0,2)$   & $(1,2)$    &$(1,3)$
&$(2,3)$ &$(2,4)$ &$(3,4)$ &$(3,5)$
\\
\hline
$ m^*$ &3&4&5&6&7&10&13&13
                \\ \hline
\end{tabular*}
{\rule{\temptablewidth}{1pt}}
\end{center}
\end{table}

In conclusion, we demonstrate that BGA can be employed to construct a family of
$A-$stable ODE solvers suitable for designing CQs.
These BGA-based schemes leverage a uniform grid, eliminating the need for special points required by RK and allowing for adjustment of convergence order without grid point changes.
While numerical evidence suggests BGA itself is not $L-$stable,
it exhibits a significantly smaller magnitude of the stability function at infinity
compared to conservative schemes.
This characteristic, similar to $L-$stable RKCQs, makes CQ based on BGA less susceptible to issues like spurious oscillations, as discussed in \cite{banjai2010multistep}.
Compared to RK, increasing the block size of BGA offers a significant advantage due to the Toeplitz-like structure of $\mathbf{A}.$
This allows us to easily capture global information using relatively large time steps, as demonstrated in \cite[pp.289]{brugnano1998solving}.

\section{BGACQ for CI \eqref{CI}}

This section explores the application of BGA to construct a family of  CQs for computing CI \eqref{CI} with a hyperbolic kernel.
We will also analyze the convergence property of this method.

Multiplying both sides of Eq. \eqref{IVPnum} by $\zeta^{n}$
and doing summation over $n$ from $0$ to $\infty$ give
\begin{align}\label{EqwithZeta}
(\mathbf{L}-h\lambda \mathbf{A})\mathbf{Y}(\zeta)
=\zeta(h\lambda \mathbf{a}-\mathbf{l})\mathbf{e}_m^T\mathbf{Y}(\zeta)
+h(\mathbf{A}+\zeta \mathbf{a}\mathbf{e}_m^T)\mathbf{G}(\zeta),
\end{align}
where
$\mathbf{Y}(\zeta)=\sum_{n=0}^{\infty}\mathbf{Y}_n\zeta^n,$
$\mathbf{G}(\zeta)=\sum_{n=0}^{\infty}\mathbf{G}_n\zeta^n.$
A direct calculation leads to
\begin{align}
\label{YGeq}
((\mathbf{L}+\zeta \mathbf{l}\mathbf{e}_m^T)
-h\lambda(\mathbf{A}+\zeta\mathbf{a}\mathbf{e}_m^T))\mathbf{Y}(\zeta) = h(\mathbf{A}+\zeta \mathbf{a}\mathbf{e}_m^T)\mathbf{G}(\zeta).
\end{align}
Since the spectral radius of $\mathbf{A}^{-1}\mathbf{a}\mathbf{e}_m^T$
equals to $|\mathbf{e}_m^T\mathbf{A}^{-1}\mathbf{a}|=|\mathrm{R}_m(\infty)|,$
it follows
from Assumption \ref{ASS} that
$\mathbf{A}+\zeta\mathbf{a}\mathbf{e}_m^T=\mathbf{A}(\mathbf{E}_m
+\zeta\mathbf{A}^{-1}\mathbf{a}\mathbf{e}_m^T)$ is invertible
for $|\zeta|<1.$
We can now safely define the discretized differential symbol
$\Delta(\zeta)$ for
BGA  as
\begin{align*}
\Delta(\zeta):=(\mathbf{A}+\zeta\mathbf{a}\mathbf{e}_m^T)^{-1}
(\mathbf{L}+\zeta \mathbf{l}\mathbf{e}_m^T).
\end{align*}
Then Eq. \eqref{YGeq} turns to
\begin{align}\label{Yrep}
\left(\Delta(\zeta)/h-\lambda \mathbf{E}_m\right)\mathbf{Y}(\zeta)=\mathbf{G}(\zeta),
\end{align}
where $\mathbf{E}_m$ denotes the $m\times m$ identity matrix.

The next step is to prove that all eigenvalues of the matrix  $\Delta(\zeta)$
with $|\zeta|<1$
lie in the complex half-plane $\mathbb{C}^{+}.$
This is equivalent to demonstrating that  $\Delta(\zeta)-z \mathbf{E}_m$
is invertible for all $z\in \mathbb{C}^-$ and $|\zeta|<1.$
In fact, we can establish a more rigorous result, as shown below.

\begin{lemma}
\label{lemmaNS}
Suppose that BGA satisfies Assumption \ref{ASS},
$\Delta(\zeta)$
is its discretized differential symbol,
and its stability domain, $\mathbb{D},$ is denoted by
\begin{align*}
\mathbb{D}:=\{z\in \mathbb{C}:|\mathrm{R}_m(z)|\leq 1\}.
\end{align*}
Then for any  $z\in \mathbb{D}$ and $|\zeta|<1,$
$\Delta(\zeta)-z \mathbf{E}_m$ is invertible
and its inverse can be represented by
\begin{align*}
(\Delta(\zeta)-z \mathbf{E}_m)^{-1}
=&(\mathbf{L}-z\mathbf{A})^{-1}\mathbf{A}
+\mathrm{R}_m(z)\mathbf{B}(\mathbf{L}-z\mathbf{A})^{-1}\mathbf{A}\zeta
\\
&+\sum_{j=2}^{\infty}((\mathrm{R}_m(z))^{j}\mathbf{B}
(\mathbf{L}-z\mathbf{A})^{-1}\mathbf{A}+
(\mathrm{R}_m(z))^{j-1}\mathbf{B}
(\mathbf{L}-z\mathbf{A})^{-1}\mathbf{a}\mathbf{e}_m^T)\zeta^j,
\end{align*}
where $\mathbf{B}:=(\mathbf{L}-z\mathbf{A})^{-1}
(z\mathbf{a}-\mathbf{l})\mathbf{e}_m^T.$
\end{lemma}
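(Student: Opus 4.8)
The plan is to reduce the whole $\zeta$-dependence to a single rank-one perturbation of the already-understood matrix $\mathbf{L}-z\mathbf{A}$, and then expand geometrically in $\zeta$. First I would isolate the factor that is known to be invertible. Factoring $(\mathbf{A}+\zeta\mathbf{a}\mathbf{e}_m^T)^{-1}$ out of $\Delta(\zeta)$ gives
\begin{align*}
\Delta(\zeta)-z\mathbf{E}_m
=(\mathbf{A}+\zeta\mathbf{a}\mathbf{e}_m^T)^{-1}\big[(\mathbf{L}-z\mathbf{A})-\zeta(z\mathbf{a}-\mathbf{l})\mathbf{e}_m^T\big].
\end{align*}
Since $\mathbf{A}+\zeta\mathbf{a}\mathbf{e}_m^T$ is already invertible for $|\zeta|<1$, it suffices to invert the bracketed factor. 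Here I would also record that $\mathbf{L}-z\mathbf{A}=\mathbf{A}(\mathbf{A}^{-1}\mathbf{L}-z\mathbf{E}_m)$ is invertible for every $z\in\mathbb{D}$: the poles of $\mathrm{R}_m$ are exactly the eigenvalues of $\mathbf{A}^{-1}\mathbf{L}$, which lie in $\mathbb{C}^{+}$ by the first part of Assumption \ref{ASS} and hence outside $\mathbb{D}$.

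The bracket is a rank-one update of $\mathbf{L}-z\mathbf{A}$, so I would write it as
\begin{align*}
(\mathbf{L}-z\mathbf{A})-\zeta(z\mathbf{a}-\mathbf{l})\mathbf{e}_m^T=(\mathbf{L}-z\mathbf{A})(\mathbf{E}_m-\zeta\mathbf{B}),
\qquad
\mathbf{B}=(\mathbf{L}-z\mathbf{A})^{-1}(z\mathbf{a}-\mathbf{l})\mathbf{e}_m^T,
\end{align*}
and invert by a Neumann series together with the matrix-determinant (Sherman--Morrison) identity. The crucial point, and the step I expect to be the main obstacle, is the invertibility of $\mathbf{E}_m-\zeta\mathbf{B}$. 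Because $\mathbf{B}$ is rank one with trace $\mathbf{e}_m^T(\mathbf{L}-z\mathbf{A})^{-1}(z\mathbf{a}-\mathbf{l})=\mathrm{R}_m(z)$, its only nonzero eigenvalue is $\mathrm{R}_m(z)$, whence $\det(\mathbf{E}_m-\zeta\mathbf{B})=1-\zeta\mathrm{R}_m(z)$. For $z\in\mathbb{D}$ one has $|\mathrm{R}_m(z)|\le1$, and for $|\zeta|<1$ this forces $|\zeta\mathrm{R}_m(z)|<1$, so $1-\zeta\mathrm{R}_m(z)\ne0$ and $\mathbf{E}_m-\zeta\mathbf{B}$ is invertible. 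This is precisely where Assumption \ref{ASS} (through the bound $|\mathrm{R}_m|\le1$ on $\mathbb{D}$) enters, and it is what guarantees the inverse exists uniformly on the product domain $|\zeta|<1,\ z\in\mathbb{D}$.

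It then remains to assemble the series. Inverting the factorization yields
\begin{align*}
(\Delta(\zeta)-z\mathbf{E}_m)^{-1}=(\mathbf{E}_m-\zeta\mathbf{B})^{-1}(\mathbf{L}-z\mathbf{A})^{-1}(\mathbf{A}+\zeta\mathbf{a}\mathbf{e}_m^T),
\end{align*}
and the same spectral bound makes the Neumann series $(\mathbf{E}_m-\zeta\mathbf{B})^{-1}=\sum_{j\ge0}\zeta^j\mathbf{B}^j$ absolutely convergent. The engine of the computation is the rank-one identity $\mathbf{B}^{\,j}=\mathrm{R}_m(z)^{\,j-1}\mathbf{B}$ for $j\ge1$, which collapses every power of $\mathbf{B}$ to a scalar multiple of $\mathbf{B}$. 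I would finish by substituting this expansion, distributing the two summands of $(\mathbf{A}+\zeta\mathbf{a}\mathbf{e}_m^T)$, and collecting equal powers of $\zeta$; the resulting coefficients are expressed through $(\mathbf{L}-z\mathbf{A})^{-1}\mathbf{A}$, $(\mathbf{L}-z\mathbf{A})^{-1}\mathbf{a}\mathbf{e}_m^T$, $\mathbf{B}$, and powers of $\mathrm{R}_m(z)$, giving the stated representation. Since every manipulation is an absolutely convergent rearrangement for $|\zeta|<1$, it is fully justified; the only genuinely delicate issue is the uniform non-vanishing of $1-\zeta\mathrm{R}_m(z)$ above, while the remainder is bookkeeping.
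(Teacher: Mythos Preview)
Your proposal is correct and follows essentially the same route as the paper: both factor $\Delta(\zeta)-z\mathbf{E}_m=(\mathbf{A}+\zeta\mathbf{a}\mathbf{e}_m^T)^{-1}(\mathbf{L}-z\mathbf{A})(\mathbf{E}_m-\zeta\mathbf{B})$, use the rank-one identity $\mathbf{B}^{j}=\mathrm{R}_m(z)^{j-1}\mathbf{B}$, and expand $(\mathbf{E}_m-\zeta\mathbf{B})^{-1}$ as a Neumann series controlled by $|\zeta\mathrm{R}_m(z)|<1$. Your Sherman--Morrison/determinant remark $\det(\mathbf{E}_m-\zeta\mathbf{B})=1-\zeta\mathrm{R}_m(z)$ is a neat alternative way to see the invertibility, but it is a cosmetic variation rather than a different argument.
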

\begin{proof}
Direct computation implies
for any $z\in  \mathbb{D},$
\begin{align*}
\Delta(\zeta)-z \mathbf{E}_m
=&(\mathbf{A}+\zeta\mathbf{a}\mathbf{e}_m^T)^{-1}
((\mathbf{L}+\zeta \mathbf{l}\mathbf{e}_m^T)
-z(\mathbf{A}+\zeta\mathbf{a}\mathbf{e}_m^T))
\\
=&(\mathbf{A}+\zeta\mathbf{a}\mathbf{e}_m^T)^{-1}
((\mathbf{L}-z\mathbf{A})
+\zeta(\mathbf{l}\mathbf{e}_m^T-z\mathbf{a}\mathbf{e}_m^T)).
\end{align*}
Because all eigenvalues of $\mathbf{A}^{-1}\mathbf{L}$
reside in the domain $\mathbb{C}\setminus\mathbb{D},$
it follows that the matrix $\mathbf{L}-z\mathbf{A}$ is invertible.
Consequently, we obtain
\begin{align*}
\Delta(\zeta)-z \mathbf{E}_m
=
(\mathbf{A}+\zeta\mathbf{a}\mathbf{e}_m^T)^{-1}
(\mathbf{L}-z\mathbf{A})
(\mathbf{E}_m - \zeta \mathbf{B}).
\end{align*}
For $j\geq 2,$ direct computation yields
\begin{align*}
\mathbf{B}^j
=\mathrm{R}_m(z)\mathbf{B}^{j-1}
=(\mathrm{R}_m(z))^{j-1}\mathbf{B}.
\end{align*}
Since $|\mathrm{R}_m(z)|\leq1$ holds for any
$z\in \mathbb{D},$ we get the inequality
$|\mathrm{R}_m(z)\zeta|<1$ for all
$|\zeta|<1,$ which implies
\begin{align*}
\|\mathbf{B}\zeta\|^j
\leq |\mathrm{R}_m(z)\zeta|^{j-1}
\|\mathbf{B}\zeta\|
< 1
\end{align*}
for sufficiently large $j.$
Consequently, the series
$ \sum_{j=0}^{\infty}(\mathbf{B} \zeta)^j$
is a Neumann series and
converges to the inverse of $\mathbf{E}_m - \zeta \mathbf{B}$
in the case of $|\zeta|<1.$
In conclusion, we have established that all eigenvalues of the matrix
$\Delta(\zeta)$ lie in the domain $\mathbb{C}\setminus\mathbb{D}$
for $|\zeta|<1.$
Furthermore, it follows by a direct calculation that
\begin{align*}
(\Delta(\zeta)-z \mathbf{E}_m)^{-1}
=&(\mathbf{L}-z\mathbf{A})^{-1}\mathbf{A}
+\mathrm{R}_m(z)\mathbf{B}(\mathbf{L}-z\mathbf{A})^{-1}\mathbf{A}\zeta
\\
&+\sum_{j=2}^{\infty}((\mathrm{R}_m(z))^{j}\mathbf{B}
(\mathbf{L}-z\mathbf{A})^{-1}\mathbf{A}+
(\mathrm{R}_m(z))^{j-1}\mathbf{B}
(\mathbf{L}-z\mathbf{A})^{-1}\mathbf{a}\mathbf{e}_m^T)\zeta^j.
\end{align*}
This completes the proof.
\end{proof}

According to Cauchy's integral formula and Lemma \ref{lemmaNS},
we have for sufficiently small $h$
\begin{align*}
\mathbf{U}(\zeta)=\frac{1}{2\pi \mathrm{i}}\int_{\Gamma}
K(\lambda)\left(\Delta(\zeta)/h-\lambda \mathbf{E}_m\right)^{-1}\mathbf{G}(\zeta)
d\lambda=K\left(\Delta(\zeta)/h\right)\mathbf{G}(\zeta),
\end{align*}
where
$\mathbf{U}(\zeta):=\sum_{n=0}^{\infty}\mathbf{U}_n\zeta^n,$
$\mathbf{U}_n:=((K(\partial_t^h)g)(t^n_1)~~\cdots~~(K(\partial_t^h)g)(t^n_m))^T$
and $(K(\partial_t^h)g)(t^n_i)$ denotes the approximation to $(K(\partial_t)g)(t^n_i).$
Then BGACQ is defined by
\begin{align*}
\mathbf{U}_n=\sum_{j=0}^n\mathbf{W}_j\mathbf{G}_{n-j} ,~~n=0,1,\cdots,N,
\end{align*}
where $K\left(\Delta(\zeta)/h\right)=\sum_{n=0}^{\infty}\mathbf{W}_n\zeta^n.$
A direct calculation leads to
\begin{align*}
(K(\partial_t^h)g)(t^n_i)=
\mathbf{e}_i^T\sum_{j=0}^{n}\mathbf{W}_j\mathbf{G}_{n-j}.
\end{align*}
In this paper, we use the composite trapezoid rule to numerically compute the weight matrix $\mathbf{W}_j,$ that is,
\begin{align*}
\mathbf{W}_j =\frac{1}{2\pi \mathrm{i}}
\int_{|\zeta|=\rho}K (\Delta(\zeta)/h )
\zeta^{-j-1}d\zeta
\approx\frac{\rho^{-j}}{L}
\sum_{l=0}^{L-1}
K (\Delta (\rho \mathrm{e}^{\mathrm{i}l2\pi/L} )/h )
\mathrm{e}^{-\mathrm{i}jl2\pi/L},
\end{align*}
where $\rho=\mathrm{tol}^{1/6N},$
$L=5N$ and $\mathrm{tol} = 10^{-16}.$
To achieve efficient computation, the above summations are implemented using the fast Fourier transform.
In order to evaluate the matrix function $K (\Delta (\rho \mathrm{e}^{\mathrm{i}l2\pi/L} )/h ),$
we first
compute the invertible matrix $\mathbf{P}$ and
the diagonal matrix $ \mathrm{diag}(d_1,\cdots,d_m)$ to
diagonalize the matrix $\Delta (\rho \mathrm{e}^{\mathrm{i}l2\pi/L} ),$ that is,
\begin{align*}
\Delta (\rho \mathrm{e}^{\mathrm{i}l2\pi/L} )
=\mathbf{P} \mathrm{diag}(d_1,\cdots,d_m)\mathbf{P}^{-1}.
\end{align*}
Then  $K (\Delta (\rho \mathrm{e}^{\mathrm{i}l2\pi/L} )/h )$
is evaluated  by
\begin{align*}
K (\Delta (\rho \mathrm{e}^{\mathrm{i}l2\pi/L} )/h )
=\mathbf{P} \mathrm{diag}(K(d_1/h),\cdots,K(d_m/h))\mathbf{P}^{-1}.
\end{align*}
We have performed numerical computations of $\Delta (\rho \mathrm{e}^{\mathrm{i}l2\pi/L} )$
for several BGAs satisfying Assumption  \ref{ASS}.
The results show that all these approaches lead to diagonalizable matrices.
Furthermore, the condition numbers of the eigenvalues of
$\Delta\left(\rho \mathrm{e}^{\mathrm{i}l2\pi/L}\right)$
are  numerically found to be between one and two, indicating that the computations are not severely affected by instabilities.
However, evaluating $K (\Delta (\rho \mathrm{e}^{\mathrm{i}l2\pi/L} )/h )$
introduces rounding errors of order $\mathcal{O}(h^{-1}).$

Let us turn to investigate the convergence property of BGACQ as the step size
$h$ of the coarse grid $\mathcal{X}_N$ tends to $0,$ while the parameters
$k_1,k_2,m$  remain fixed.
Following  \cite{lubich1988convolution1}, we achieve this by integrating the product of the error bound for BGA and
 $K(\lambda)$ over the curve $\Gamma.$
For $n=0,\cdots,N-1,$ we have
\begin{align}
\label{IVPexact}
(\mathbf{L}-h \mathbf{A})\mathbf{Y}_{n}^{ref}
= (h\lambda \mathbf{a}-\mathbf{l})\mathbf{e}_m^T\mathbf{Y}_{n-1}^{ref} + h\mathbf{a}\mathbf{e}_m^T\mathbf{G}_{n-1}
+h\mathbf{A}\mathbf{G}_{n} +\mathbf{Q}_n,
\end{align}
where $\mathbf{Q}_n=(q^n_1~~\cdots~~q^n_m)^T.$
Combining Eqs. \eqref{IVPnum} and \eqref{IVPexact} yields
\begin{align*}
(\mathbf{L}-h\lambda \mathbf{A})(\mathbf{Y}_{n}^{ref} - \mathbf{Y}_{n} )
= (h\lambda \mathbf{a}-\mathbf{l})\mathbf{e}_m^T(\mathbf{Y}_{n-1}^{ref}- \mathbf{Y}_{n-1} )
+\mathbf{Q}_n.
\end{align*}
Denoting the collocation error $\epsilon^n_j := y(t^n_j)-y^n_j$
with $j=1,\cdots,m,$
we obtain
\begin{align*}
\epsilon^n_m=\mathrm{R}_m(h\lambda)\epsilon^{n-1}_m
+\mathbf{e}_m^T(\mathbf{L}-h\lambda \mathbf{A})^{-1}\mathbf{Q}_n.
\end{align*}
An iterative calculation results in
\begin{align*}
\epsilon^n_m=\sum_{j=0}^{n}(\mathrm{R}_m(h\lambda))^j\mathbf{e}_m^T
(\mathbf{L}-h\lambda \mathbf{A})^{-1}\mathbf{Q}_{n-j}.
\end{align*}
By leveraging the representation of the collocation errors of BGA for IVP \eqref{IVP}, we establish the convergence order of BGACQ for CI \eqref{CI} when  $\mu<0.$

\begin{theorem}\label{mainthm}
Suppose that
\begin{itemize}
  \item  The function $g$ in CI \eqref{CI} possesses a continuous derivative of at least order $k_1+k_2+2$
         over $[0,T]$ and additionally satisfies the initial condition
         $$g(0)=g^{(1)}(0)=\cdots=g^{(k_1+k_2+1)}(0)=0;$$
  \item The underlying ODE solver, BGA,  satisfies Assumption \ref{ASS};
\item $K(\lambda)$ is analytic in the half-plane $\mathbb{C}_{\sigma}$ and
bounded as $|K(\lambda)|\leq M|\lambda|^{\mu}$ with a positive constant $M$ and a negative real number $\mu.$
\end{itemize}
Then, for sufficiently large $N$ and any $n$ satisfying $0\leq n\leq N-1,$
it follows that
\begin{align*}
\|\mathbf{Err}^{\mathrm{CQ}}_{n,N}\|\leq
C h^{k_1+k_2+2}.
\end{align*}
where
\begin{align*}
\mathbf{Err}^{\mathrm{CQ}}_{n,N}:=
(
   (K(\partial_t)g)(t^n_1)-(K(\partial_t^h)g)(t^n_1)~,~\cdots~,~
 (K(\partial_t)g)(t^n_m)-(K(\partial_t^h)g)(t^n_m)
  )^T.
\end{align*}
\end{theorem}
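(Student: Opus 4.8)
The plan is to follow the strategy of \cite{lubich1988convolution1}: represent the error $\mathbf{Err}^{\mathrm{CQ}}_{n,N}$ as a contour integral over $\Gamma$ of $K(\lambda)$ against the collocation errors of BGA, and then bound this integral by combining the local estimate of Lemma \ref{AdaErr} with resolvent and stability bounds. Writing $(K(\partial_t)g)(t^n_i)=\frac{1}{2\pi\mathrm{i}}\int_\Gamma K(\lambda)y(t^n_i)\,d\lambda$ and, via Lemma \ref{lemmaNS} and Cauchy's formula, $(K(\partial_t^h)g)(t^n_i)=\frac{1}{2\pi\mathrm{i}}\int_\Gamma K(\lambda)y^n_i\,d\lambda$, the first step is to establish
\[
\mathbf{Err}^{\mathrm{CQ}}_{n,N}=\frac{1}{2\pi\mathrm{i}}\int_{\Gamma}K(\lambda)\,(\epsilon^n_1,\cdots,\epsilon^n_m)^T\,d\lambda ,
\]
so that the whole problem reduces to a $\lambda$-uniform bound on the collocation error vector weighted by $|K(\lambda)|$. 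Subtracting Eq. \eqref{IVPnum} from Eq. \eqref{IVPexact} gives the recursion $(\mathbf{L}-h\lambda\mathbf{A})(\epsilon^n_1,\cdots,\epsilon^n_m)^T=(h\lambda\mathbf{a}-\mathbf{l})\epsilon^{n-1}_m+\mathbf{Q}_n$, so every interior component is controlled by the single endpoint quantity $\epsilon^{n-1}_m$ together with the local defect $\mathbf{Q}_n$; it therefore suffices to estimate $\epsilon^{n-1}_m=\sum_{j=0}^{n-1}(\mathrm{R}_m(h\lambda))^j\mathbf{e}_m^T(\mathbf{L}-h\lambda\mathbf{A})^{-1}\mathbf{Q}_{n-1-j}$ and then to premultiply by the uniformly bounded matrix $(\mathbf{L}-h\lambda\mathbf{A})^{-1}(h\lambda\mathbf{a}-\mathbf{l})$.

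For the matrix factors I would record, for $\lambda\in\Gamma$ and $h$ small, the resolvent bound $\|(\mathbf{L}-h\lambda\mathbf{A})^{-1}\|\le C/(1+h|\lambda|)$. This follows from Assumption \ref{ASS}: since $\det(\mathbf{L}-z\mathbf{A})=\det(\mathbf{A})\prod_i(\nu_i-z)$ with the eigenvalues $\nu_i$ of $\mathbf{A}^{-1}\mathbf{L}$ lying strictly in the right half-plane, the determinant is bounded away from $0$ when $z=h\lambda$ sits near the imaginary axis, while $\mathbf{L}-z\mathbf{A}=-z(\mathbf{A}-z^{-1}\mathbf{L})$ yields the $1/|z|$ decay for large $|z|$. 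Together with the $A$-stability bound $|\mathrm{R}_m(h\lambda)|\le1$ on $\mathbb{C}^{-}$, the first-order consistency relation $\mathrm{R}_m(z)=1+z+O(z^2)$ (so that $|1-\mathrm{R}_m(z)|\ge c|z|$ for small $|z|$), and the limit $|\mathrm{R}_m(\infty)|<1$, these supply all the scalar estimates that enter the accumulation.

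The crux, and the step I expect to be hardest, is to show that summing the $O(h^{k_1+k_2+3})$ local defects through the geometric factor $(\mathrm{R}_m(h\lambda))^j$ and integrating against $K$ over the unbounded contour costs exactly one power of $h$, producing $O(h^{k_1+k_2+2})$. The naive bound $\sum_{j=0}^{n-1}|\mathrm{R}_m(h\lambda)|^j\le n\le T/h$ only returns the claimed order when $\mu<-1$ and degrades for $-1\le\mu<0$; the remedy is to exploit cancellation in the geometric series rather than the triangle inequality. Concretely, I would pass to the generating function in $\zeta$, which turns the scalar recursion $\epsilon^n_m=\mathrm{R}_m(h\lambda)\epsilon^{n-1}_m+\mathbf{e}_m^T(\mathbf{L}-h\lambda\mathbf{A})^{-1}\mathbf{Q}_n$ into $\sum_{n\ge0}\epsilon^n_m\zeta^n=(1-\zeta\mathrm{R}_m(h\lambda))^{-1}\sum_{n\ge0}\mathbf{e}_m^T(\mathbf{L}-h\lambda\mathbf{A})^{-1}\mathbf{Q}_n\zeta^n$, and read off the coefficient on a circle $|\zeta|=\rho<1$. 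The factor $(1-\zeta\mathrm{R}_m(h\lambda))^{-1}$, which is where the $j$-dependence of $\mathbf{Q}_{n-1-j}$ is correctly absorbed, contributes the gain $|1-\mathrm{R}_m(h\lambda)|^{-1}\le C/(h|\lambda|)$ near the origin through the consistency estimate, while for $|h\lambda|$ large the condition $|\mathrm{R}_m(\infty)|<1$ keeps $(1-\zeta\mathrm{R}_m)^{-1}$ uniformly bounded and the resolvent supplies an extra $1/(h|\lambda|)$; splitting $\Gamma$ at $|\lambda|\sim 1/h$ and tracking that $|\mathrm{R}_m(h\lambda)|^n\le e^{\sigma T}$ (because $nh\le T$) on the inner part yields the uniform bound $\|(\epsilon^n_1,\cdots,\epsilon^n_m)^T\|\le C h^{k_1+k_2+2}/|\lambda|$.

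Finally I would assemble the contour estimate. With this bound and $|K(\lambda)|\le M|\lambda|^{\mu}$, and since $\Gamma=\sigma+\mathrm{i}\mathbb{R}$ stays away from the singularity of $K$, one obtains
\[
\|\mathbf{Err}^{\mathrm{CQ}}_{n,N}\|\le C h^{k_1+k_2+2}\int_{-\infty}^{\infty}(\sigma^2+\omega^2)^{(\mu-1)/2}\,d\omega ,
\]
and the integral converges because $\mu-1<-1$, giving $\|\mathbf{Err}^{\mathrm{CQ}}_{n,N}\|\le C h^{k_1+k_2+2}$. The residual bookkeeping, namely the passage from $\epsilon^{n-1}_m$ to the interior components $\epsilon^n_j$ via the recursion and the uniformity of all constants in $n$ and $\lambda$, is routine once the endpoint estimate above is in place.
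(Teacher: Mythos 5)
Your skeleton matches the paper's: represent the error as $\frac{1}{2\pi\mathrm{i}}\int_\Gamma K(\lambda)\,\epsilon^n\,d\lambda$, derive the recursion $\epsilon^n_m=\mathrm{R}_m(h\lambda)\epsilon^{n-1}_m+\mathbf{e}_m^T(\mathbf{L}-h\lambda\mathbf{A})^{-1}\mathbf{Q}_n$, split $\Gamma$ according to the size of $|h\lambda|$, and use $|\mathrm{R}_m(\infty)|<1$ plus resolvent decay on the outer piece. You also diagnose the crux correctly: the naive bound $\sum_j|\mathrm{R}_m(h\lambda)|^j\le n\le T/h$ loses a factor $h^{-(1+\mu)}$ for $-1<\mu<0$, so on the intermediate piece one needs an accumulated bound of size $C/(h|\lambda|)$ (the paper supplies this by asserting $|z|\bigl(1-|\mathrm{R}_m(z)|^{n+1}\bigr)/\bigl(1-|\mathrm{R}_m(z)|\bigr)\le C$ for $|z|\le 1$ on its piece $\Gamma_2$).

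However, your mechanism for that crux has a genuine gap: you conflate $|1-\mathrm{R}_m(z)|$ with $1-|\mathrm{R}_m(z)|$. Consistency, $\mathrm{R}_m(z)=1+z+O(z^2)$, does give $|1-\mathrm{R}_m(z)|\ge c|z|$, but that is not the quantity your argument uses. When you extract the coefficient $\epsilon^n_m$ by integrating over $|\zeta|=\rho$ and bound by absolute values, the factor you face is $\max_{|\zeta|=\rho}|1-\zeta\mathrm{R}_m(h\lambda)|^{-1}=\bigl(1-\rho|\mathrm{R}_m(h\lambda)|\bigr)^{-1}$, i.e.\ the difference of moduli. Consistency forces the opposite bound there: since $|\mathrm{e}^{\mathrm{i}y}|=1$, Eq.~\eqref{approxEXP} gives $1-|\mathrm{R}_m(\mathrm{i}y)|\le C|y|^{k_1+k_2+3}$, so no estimate of the form $1-|\mathrm{R}_m(h\lambda)|\ge c\,h|\lambda|$ can hold near the imaginary axis, and the claimed gain $C/(h|\lambda|)$ is unavailable to any absolute-value argument; note also that $\sum_j|\mathrm{R}_m|^j$ has nonnegative terms, so there is no cancellation to exploit in that series. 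The cancellation you appeal to lives in the complex sum $\sum_j\mathrm{R}_m(h\lambda)^j\,\mathbf{e}_m^T(\mathbf{L}-h\lambda\mathbf{A})^{-1}\mathbf{Q}_{n-j}$, and harvesting it through $|1-\mathrm{R}_m(h\lambda)|^{-1}$ would require, e.g., Abel summation together with a $\lambda$-uniform bound on the variation $\sum_j\|\mathbf{Q}_{j+1}-\mathbf{Q}_j\|$, which your proposal never supplies; the generating-function wrapper, once bounded on a circle, discards exactly the phase information that would make this work. Two secondary problems: bounding $\max_{|\zeta|=\rho}|\hat{Q}(\zeta)|$ termwise costs an extra factor $\min\bigl(N,(1-\rho)^{-1}\bigr)$ you do not account for; and your final estimate $\|\epsilon^n\|\le Ch^{k_1+k_2+2}/|\lambda|$ produces $\int_\Gamma|\lambda|^{\mu-1}|d\lambda|$, which diverges as $\lambda\to0$ when $\sigma=0$ — the paper avoids this by using the cruder $n\le T/h$ bound (with no $1/|\lambda|$) on its inner piece $\Gamma_1$, and you should do the same near the origin.
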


\begin{proof}
Let us split the curve $\Gamma$ into three parts,
\begin{align*}
\Gamma_1:=\{\lambda\in \Gamma:|\lambda|\leq 1\},
\Gamma_2:=\{\lambda\in \Gamma:h\leq|h\lambda|\leq 1\},
\Gamma_3:=\{\lambda\in \Gamma:|h\lambda|\geq 1\}.
\end{align*}
For sufficiently small $h,$
Lemma \ref{AdaErr} guarantees
\begin{align*}
\|\mathbf{Q}_{n-j}\|\leq Ch^{k_1+k_2+3} ~~\mathrm{for~~any}~~ \lambda \in \Gamma_1.
\end{align*}
Moreover, since $0$ is not an eigenvalue of $\mathbf{A}^{-1}\mathbf{L},$
$\|\mathbf{e}_m^T
(\mathbf{L}-h\lambda \mathbf{A})^{-1}\|$ remains bounded by $C.$
Applying BGA  to $y'(t)=\lambda y(t)$ with the initial value $y(0)=1$
yields
\begin{align}\label{approxEXP}
\mathrm{R}_m(h\lambda) = \mathrm{e}^{h\lambda}+\mathcal{O}((h\lambda)^{k_1+k_2+3})
~~\mathrm{as}~~h\lambda\rightarrow 0.
\end{align}
As a result, we have $|\mathrm{R}_m(h\lambda)|^j$ is
bounded by $C$ when $\lambda\in \Gamma_1$
and $h$ is sufficiently small.
Therefore, it follows
\begin{align}
\label{ErrG1}
\left|\int_{\Gamma_1}K(\lambda)\epsilon^n_md\lambda\right|
\leq Ch^{k_1+k_2+2}.
\end{align}
Let's now consider the case where $\lambda \in \Gamma_2.$
Combining Eq. \eqref{approxEXP} with Assumption \ref{ASS} gives
\begin{align*}
\frac{|z|(1-|\mathrm{R}_m(z)|^{n+1})}{1-|\mathrm{R}_m(z)|}\leq C~~\mathrm{for}~~|z|\leq 1.
\end{align*}
Additionally, $\|\mathbf{e}_m^T
(\mathbf{L}-h\lambda \mathbf{A})^{-1}\|$ remains bounded by $C.$
With the help of Lemma \ref{AdaErr}, we have
\begin{align}\label{ErrG2}
\nonumber
\left|\int_{\Gamma_2}K(\lambda)\epsilon^n_md\lambda\right|
\leq& \int_{\Gamma_2}|K(\lambda)|
\frac{1-|\mathrm{R}_m(h\lambda)|^{n+1}}{1-|\mathrm{R}_m(h\lambda)|}
\|\mathbf{e}_m^T
(\mathbf{L}-h\lambda \mathbf{A})^{-1}\|\|\mathbf{Q}_{n-j}\|
|d\lambda|
\\ \nonumber
\leq &Ch^{k_1+k_2+2}
\int_{h\Gamma_2}|K(w/h)||w|^{-1}
\frac{|w|(1-|\mathrm{R}_m(w)|^{n+1})}{1-|\mathrm{R}_m(w)|}
|dw|
\\
\leq &Ch^{k_1+k_2+2-\mu}(1+h^{\mu})\leq Ch^{k_1+k_2+2}.
\end{align}
where the variable transformation $w=h\lambda$ is employed.
In the case of $\lambda \in \Gamma_3,$
it follows that $|\mathrm{R}_m(h\lambda)|\leq \rho<1,$
which implies $\sum_{j=0}^{n}|\mathrm{R}_m(h\lambda)|^j$ remains bounded
by $C.$
Meanwhile, we have the product $\|(\mathbf{L}-h\lambda \mathbf{A})^{-1}\||h\lambda|$ is also
bounded by $C.$
Following Lemma \ref{AdaErr}, this implies that
\begin{align}
\label{ErrG3}
\nonumber
\left|\int_{\Gamma_3}K(\lambda)\epsilon^n_md\lambda\right|\leq&
C\int_{\Gamma_3}\frac{|\lambda|^{\mu}}{|h\lambda|}
\left(\sum_{j=0}^{n}|\mathrm{R}_m(h\lambda)|^j\right)
\left(\|(\mathbf{L}-h\lambda \mathbf{A})^{-1}\||h\lambda|\right)
\|\mathbf{Q}_{n-j}\|
|d\lambda|
\\
\leq& Ch^{k_1+k_2+2}\int_{\Gamma_3}|\lambda|^{\mu-1}|d\lambda|\leq
Ch^{k_1+k_2+2-\mu}
\end{align}

Combining Eqs. \eqref{ErrG1}, \eqref{ErrG2} and \eqref{ErrG3} gives
\begin{align*}
|(K(\partial_t)g)(t^n_m)-(K(\partial_t^h)g)(t^n_m)|
\leq C h^{k_1+k_2+2}.
\end{align*}
Letting $\mathrm{R}_{j}(z)=\mathbf{e}_j^{T}(\mathbf{L}-z \mathbf{A})^{-1}
(z \mathbf{a}-\mathbf{l}),$
 it follows by direct computation
\begin{align*}
\epsilon_j^n=&\frac{\mathrm{R}_{j}(h\lambda)}
{\mathrm{R}_{m}(h\lambda)}
\mathrm{R}_{m}(h\lambda)\epsilon^{n-1}_m
+\mathbf{e}_j^T(\mathbf{L}-h\lambda \mathbf{A})^{-1}\mathbf{Q}_n
\\
=&\frac{\mathrm{R}_{j}(h\lambda)}{\mathrm{R}_{m}(h\lambda)}
\sum_{j=1}^n(\mathrm{R}_{m}(h\lambda))^j
\mathbf{e}_m^T(\mathbf{L}-h\lambda \mathbf{A})^{-1}\mathbf{Q}_{n-j}
+\mathbf{e}_j^T(\mathbf{L}-h\lambda \mathbf{A})^{-1}\mathbf{Q}_n.
\end{align*}
Since the ratio $\mathrm{R}_{j}(h\lambda)/\mathrm{R}_{m}(h\lambda)$ remains bounded by $C$ for any $\lambda\in \Gamma,$
we   deduce that the convergence orders of the numerical errors on the fine grids coincide with those on the coarse grids.
This implies that the convergence of BGACQ is uniform across the entire grid,
that is,
\begin{align*}
\|\mathbf{Err}^{\mathrm{CQ}}_{n,N}\|\leq
C h^{k_1+k_2+2}.
\end{align*}
This completes the proof.
\end{proof}

In the case of $\mu\geq 0,$
we define $r$ to be the smallest integer exceeding $\mu,$
and let $K_r(\lambda)=K(\lambda)/\lambda^r.$
It follows by a direct calculation that
\begin{align*}
(K(\partial_t)g)(t^{n}_i)-(K(\partial_t^h)g)(t^{n}_i)
=&(K_r(\partial_t)(\partial_t)^rg)(t^{n}_i)
-(K_r(\partial^h_t)(\partial_t)^rg)(t^{n}_i)
\\
&+(K(\partial^h_t)(\partial_t^h)^{-r}(\partial_t)^rg)(t^{n}_i)
-(K(\partial_t^h)g)(t^{n}_i).
\end{align*}
Suppose that $g$ satisfies
\begin{align*}
g(0)=g^{(1)}(0)=\cdots=g^{(r+k_1+k_2+1)}(0)=0.
\end{align*}
By application of Theorem \ref{mainthm}, we have that
\begin{align}\label{Diff1}
|(K_r(\partial_t)(\partial_t)^rg)(t^{n}_i)
-(K_r(\partial^h_t)(\partial_t)^rg)(t^{n}_i)|\leq Ch^{k_1+k_2+2}.
\end{align}
Note that
\begin{align*}
K(\Delta(\zeta)/h)=\frac{1}{2\pi \mathrm{i}}
\int_{\Gamma'}K(\lambda/h)(\Delta(\zeta)-\lambda \mathbf{E}_m)^{-1}d\lambda,
\end{align*}
where
$\Gamma'$ comprise a circular arc $|\lambda|=r$ and $\mathrm{Re}(\lambda)\geq h\sigma$ such that it falls in the stability domain of the ODE
solver.
Based on Lemma \ref{lemmaNS}, we obtain the following derivation
\begin{align*}
\sum_{n=0}^N\|\mathbf{W}_n\|\leq &
C\int_{\Gamma'}|K(\lambda/h)|\sum_{n=0}^{N}
|\mathrm{R}_m(\lambda)|^{n}|d\lambda|.
\end{align*}
Split $\Gamma'$ into two parts,
\begin{align*}
\Gamma'_1:=\{\lambda\in\Gamma' : |\lambda|\leq h \}
~~ \mathrm{and} ~~ \Gamma'_2:=\Gamma'\setminus \Gamma'_1.
\end{align*}
Then we can obtain the result using an analogous argument to the proof of Theorem \ref{mainthm}
\begin{align*}
\int_{\Gamma'_1}|K(\lambda/h)|\sum_{n=0}^{N}
|\mathrm{R}_m(\lambda)|^{n}|d\lambda|
\leq C,
\end{align*}
and
\begin{align*}
\int_{\Gamma'_2}|K(\lambda/h)|\sum_{n=0}^{N}
|\mathrm{R}_m(\lambda)|^{n}\||d\lambda|
=&\int_{\Gamma'_2}|K(\lambda/h)||\lambda|^{-1}\left(|\lambda|\sum_{n=0}^{N}
|\mathrm{R}_m(\lambda)|^{n}\right)|d\lambda|
\\
\leq &Ch^{-\mu}\int_{\Gamma'_2}|\lambda|^{\mu-1}|d\lambda|
\leq C\left\{
        \begin{array}{ll}
          |\log(h)|, & \mu=0 ,\\
          h^{-\mu}, &  \mu>0 .
        \end{array}
      \right.
\end{align*}
Therefore, we get
\begin{align}
\label{boundforweight}
\sum_{n=0}^N\|\mathbf{W}_n\|\leq C\left\{
        \begin{array}{ll}
          |\log(h)|, & \mu=0 ,\\
          h^{-\mu}, &  \mu>0 .
        \end{array}
      \right.
\end{align}
The difference $((\partial_t^h)^{-r}(\partial_t)^rg)(t)-g(t)$
is just the local error of BGA applied to ODE $y^{(r)}(t)=g^{(r)}(t)$
with zero initial values (see \cite{banjai2011an}).
Thus we have
\begin{align*}
|((\partial_t^h)^{-r}(\partial_t)^rg-g)(t^n_i)|\leq Ch^{k_1+k_2+3}.
\end{align*}
Letting $\delta \mathbf{G}_n:=(
                                ((\partial_t^h)^{-r}(\partial_t)^rg-g)(t^n_1)~,~    \cdots~,~    ((\partial_t^h)^{-r}(\partial_t)^rg-g)(t^n_m)
                             )^T,$
 we get
\begin{align}\label{Diff2}\nonumber
&|(K(\partial^h_t)(\partial_t^h)^{-r}(\partial_t)^rg)(t^{n}_i)
-(K(\partial_t^h)g)(t^{n}_i)|
\\
\leq&\|\mathbf{e}_i^T\|\sum_{j=0}^n\|\mathbf{W}_{n-j}\|
\|\delta \mathbf{G}_j\|
\leq
C\left\{
        \begin{array}{ll}
          h^{k_1+k_2+3}|\log(h)|, & \mu=0 ,\\
          h^{k_1+k_2+3-\mu}, &  \mu>0 .
        \end{array}
      \right.
\end{align}
Now we get the convergence property of BGACQ
by combining Eqs. \eqref{Diff1} and \eqref{Diff2} in the case of $\mu\geq 0.$

\begin{theorem}\label{mainthm2}
Suppose that
\begin{itemize}
  \item  The function $g$ in CI \eqref{CI} possesses a continuous derivative of at least order $r+k_1+k_2+2$
         over $[0,T]$ and additionally satisfies the initial condition
         $$g(0)=g^{(1)}(0)=\cdots=g^{(r+k_1+k_2+1)}(0)=0,$$
         where $r$ denotes the smallest integer exceeding $\mu;$
  \item The underlying ODE solver, BGA,  satisfies Assumption
\ref{ASS};
\item $K(\lambda)$ is analytic in the half-plane $\mathbb{C}_{\sigma}$ and
bounded as $|K(\lambda)|\leq M|\lambda|^{\mu}$ with a positive constant $M$ and a nonnegative real number $\mu.$
\end{itemize}
Then, for sufficiently large $N$ and any $n$ satisfying $0\leq n\leq N-1,$
it follows that
\begin{align*}
\|\mathbf{Err}^{\mathrm{CQ}}_{n,N}\|\leq
Ch^{\min\{k_1+k_2+3-\mu , k_1+k_2+2\}}.
\end{align*}
\end{theorem}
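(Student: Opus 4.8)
The plan is to deduce Theorem \ref{mainthm2} from the already-settled negative-exponent case, Theorem \ref{mainthm}, by peeling off the polynomial growth of the kernel. The backbone is the operational identity displayed immediately before the statement, which splits the pointwise error $(K(\partial_t)g)(t^n_i)-(K(\partial_t^h)g)(t^n_i)$ into a \emph{reduced-kernel} contribution built from $K_r(\lambda)=K(\lambda)/\lambda^r$ acting on $(\partial_t)^r g$, and a \emph{smoothing-defect} contribution measuring how far the discrete antiderivative $(\partial_t^h)^{-r}(\partial_t)^r g$ lies from $g$. I would first confirm that this split is legitimate: it rests on the composition rule $K(\partial_t^h)=K_r(\partial_t^h)(\partial_t^h)^r$ for the BGA calculus, which holds because $K(\Delta(\zeta)/h)=K_r(\Delta(\zeta)/h)(\Delta(\zeta)/h)^r$ as matrix functions, so that $K_r(\partial_t^h)(\partial_t)^r g = K(\partial_t^h)(\partial_t^h)^{-r}(\partial_t)^r g$. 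This is the one structural fact needed to make the decomposition meaningful.

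For the reduced-kernel term I would invoke Theorem \ref{mainthm} directly. Since $r$ is the smallest integer exceeding $\mu$, we have $|K_r(\lambda)|\le M|\lambda|^{\mu-r}$ with $\mu-r<0$, so $K_r$ meets the negative-exponent hypothesis. The new driving function is $(\partial_t)^r g$; because $g$ vanishes through order $r+k_1+k_2+1$, the Dirac contributions in $\partial_t$ drop out and $(\partial_t)^r g=g^{(r)}$, which then vanishes through order $k_1+k_2+1$ and is as smooth as required. Theorem \ref{mainthm} thus applies to $(K_r,g^{(r)})$ and yields the full-vector bound, giving the componentwise estimate $\mathcal{O}(h^{k_1+k_2+2})$ recorded in \eqref{Diff1}, uniformly over the fine-grid index $i$.

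For the smoothing-defect term I would treat it as the discrete convolution $\mathbf{e}_i^T\sum_{j}\mathbf{W}_{n-j}\,\delta\mathbf{G}_j$ of the BGACQ weight matrices against the defect vectors $\delta\mathbf{G}_j$. The entries of $\delta\mathbf{G}_j$ are exactly the local error of BGA applied to $y^{(r)}=g^{(r)}$ with zero initial data, hence of size $\mathcal{O}(h^{k_1+k_2+3})$, while the weight sum $\sum_n\|\mathbf{W}_n\|$ is controlled by contour-integrating $K(\Delta(\zeta)/h)$ and applying Lemma \ref{lemmaNS}, producing the $|\log h|$ (for $\mu=0$) and $h^{-\mu}$ (for $\mu>0$) bounds of \eqref{boundforweight}. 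Taking the Cauchy product of these two estimates reproduces \eqref{Diff2}, namely $\mathcal{O}(h^{k_1+k_2+3}|\log h|)$ when $\mu=0$ and $\mathcal{O}(h^{k_1+k_2+3-\mu})$ when $\mu>0$, again uniformly in $i$ through the $\mathbf{e}_i^T$ factor.

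Finally I would combine \eqref{Diff1} and \eqref{Diff2} and match the stated exponent. When $\mu=0$ the defect term carries a logarithm, but $h^{k_1+k_2+3}|\log h|=o(h^{k_1+k_2+2})$, so it is absorbed into the reduced-kernel rate and the overall order is $h^{k_1+k_2+2}=h^{\min\{k_1+k_2+3,k_1+k_2+2\}}$. When $\mu>0$ the competing exponents $k_1+k_2+2$ and $k_1+k_2+3-\mu$ cross at $\mu=1$, and retaining the slower-decaying one yields $h^{\min\{k_1+k_2+3-\mu,\,k_1+k_2+2\}}$; since both component bounds hold uniformly over $i=1,\dots,m$, this is immediately a bound on $\|\mathbf{Err}^{\mathrm{CQ}}_{n,N}\|$ with no separate coarse-to-fine promotion required beyond what Theorem \ref{mainthm} already supplies. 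The main obstacle is not any single estimate but the bookkeeping around the weight bound \eqref{boundforweight}: one must verify that the $h^{-\mu}$-type amplification coming from $K(\lambda/h)$ is exactly offset by the order gained in $\delta\mathbf{G}$, and handle the borderline $\mu=0$ logarithm so that it is dominated rather than allowed to spoil the rate.
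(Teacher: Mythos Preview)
Your proposal is correct and follows essentially the same approach as the paper: the identical decomposition into a reduced-kernel term handled by Theorem~\ref{mainthm} applied to $(K_r,g^{(r)})$, and a smoothing-defect term bounded via the weight estimate \eqref{boundforweight} together with the $\mathcal{O}(h^{k_1+k_2+3})$ local error of BGA on $y^{(r)}=g^{(r)}$. Your write-up even makes explicit a couple of points the paper leaves implicit, namely the composition identity $K_r(\partial_t^h)(\partial_t)^r g = K(\partial_t^h)(\partial_t^h)^{-r}(\partial_t)^r g$ underlying the split, and the fact that the $\mu=0$ logarithmic factor is harmlessly absorbed into the $h^{k_1+k_2+2}$ rate.
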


In solving time-domain boundary integral equations
for some acoustic scattering problems, the dissipative property of CQ becomes critical.
Lower dissipation in these methods generally translates to improved computational efficiency.
As discussed for RKCQ in \cite[Chapter 5]{banjai2022integral},
discretizing the time-domain boundary integral equation using BGACQ involves replacing the free-space Green's function kernel
$\mathcal{G}(\mathbf{x},\lambda)$ with $\mathcal{G}(\mathbf{x},\hat{\lambda}).$
Here, for sufficiently small $\hat{\lambda}h,$
$\hat{\lambda}$ is the solution of
$\mathrm{R}_m(\hat{\lambda}h)=\mathrm{e}^{\lambda h}.$
Analyzing the numerical scheme's dissipation then involves expanding
$\mathrm{R}_m^{-1}(\mathrm{e}^{-\mathrm{i}\omega})$ to obtain the approximation $\hat{\lambda}$
to the frequency
$\lambda = \mathrm{i}\omega.$
Since
\begin{align*}
\mathrm{R}_m'(z)=\mathbf{e}_m^T((\mathbf{L}-z\mathbf{A})^{-1}
\mathbf{A}(\mathbf{L}-z\mathbf{A})^{-1})(z\mathbf{a}-\mathbf{l})+
\mathbf{e}_m^T(\mathbf{L}-z\mathbf{A})^{-1}\mathbf{a},
\end{align*}
we have $\mathrm{R}_m'(0)=\mathbf{e}_m^T
\mathbf{L}^{-1}(-\mathbf{A}\mathbf{L}^{-1}\mathbf{l}+\mathbf{a})=1,$
which implies
the inverse $\mathrm{R}_m^{-1}$ around origin is well-defined.
We have  computed the difference between
$\lambda$ and $\hat{\lambda}$  numerically for various BGACQs,
and present the results in the following,
\begin{align*}
\mathrm{BGACQ}_{0,1}^3:~&\hat{\lambda}=\lambda
+5.1\times 10^{-4}\omega(\omega h)^{3}
+7.6\times 10^{-4}\mathrm{i}\omega(\omega h)^{4}+\omega \mathcal{O}((\omega h)^{5}),
\\
\mathrm{BGACQ}_{0,2}^4:~&\hat{\lambda}=\lambda
-6.2\times 10^{-5}\mathrm{i}\omega(\omega h)^{4}
+6.7\times 10^{-5}\omega(\omega h)^{5}+\omega \mathcal{O}((\omega h)^{6}),
\\
\mathrm{BGACQ}_{1,2}^5:~&\hat{\lambda}=\lambda
+4.9\times 10^{-7}\omega(\omega h)^{5}
-8.8\times 10^{-8}\mathrm{i}\omega(\omega h)^{6}+\omega \mathcal{O}((\omega h)^{7}),
\\
\mathrm{BGACQ}_{0,1}^{12}:~&\hat{\lambda}=\lambda
+2.0\times 10^{-5}\omega(\omega h)^{3}
+2.1\times 10^{-5}\mathrm{i}\omega(\omega h)^{4}+\omega \mathcal{O}((\omega h)^{5}),
\\
\mathrm{BGACQ}_{0,2}^{12}:~&\hat{\lambda}=\lambda
-1.1\times 10^{-6}\mathrm{i}\omega(\omega h)^{4}
+1.2\times 10^{-6}\omega(\omega h)^{5}+\omega \mathcal{O}((\omega h)^{6}),
\\
\mathrm{BGACQ}_{1,2}^{12}:~&\hat{\lambda}=\lambda
-2.0\times 10^{-8}\omega(\omega h)^{5}
-2.0\times 10^{-8}\mathrm{i}\omega(\omega h)^{6}+\omega \mathcal{O}((\omega h)^{7}).
\end{align*}
We can see the dissipation of BGACQ is demonstrably small and diminishes further as the block size  increases.

If the initial conditions for $g$ in Theorems \ref{mainthm}
and \ref{mainthm2} are not met, the convergence order of
BGACQ will be reduced.
To overcome this limitation, modifications to BGACQ should be explored.
We first focus on the case with $\mu<0.$
For a general $g,$ applying Taylor's expansion yields
\begin{align*}
g(t)=&\sum_{l=0}^{k_1+k_2+1}\frac{t^l}{l!}g^{(l)}(0)+
\frac{1}{(k_1+k_2+1)!}\int_0^tg^{(k_1+k_2+2)}(s)(t-s)^{k_1+k_2+1}ds
\\
=&\sum_{l=0}^{k_1+k_2+1}\frac{g^{(l)}(0)}{l!}p_l(t)+\tilde{p}(t),
\end{align*}
where
\begin{align*}
p_l(t):=t^l,~~
\tilde{p}(t):= \frac{1}{(k_1+k_2+1)!}\int_0^tg^{(k_1+k_2+2)}(s)(t-s)^{k_1+k_2+1}ds.
\end{align*}
We solve the following equations for $i=1,\cdots,m$
and $l=0,\cdots,k_1+k_2+1,$
\begin{align}
\label{Modeq}
(K(\partial_t)p_l)(t^{n}_i)=
\mathbf{e}_i^T\sum_{j=0}^{n}\mathbf{W}_{n-j}\mathbf{p}_l^j
+\sum_{j=0}^{k_1+k_2+1}\omega_{i,j}^np_l(t^0_j),
\end{align}
where $\mathbf{p}_l^j:=(p_l(t^j_1)~,~\cdots~,~p_l(t^j_m))^T.$
Letting
\begin{align*}
\mathbf{w}_i:=&\left(
                \begin{array}{c}
                  w^n_{i,0} \\
                   w^n_{i,1} \\
                  \vdots \\
                   w^n_{i,k_1+k_2+1} \\
                \end{array}
              \right),
\mathbf{K}_i:=\left(
                \begin{array}{c}
                  (K(\partial_t)p_0)(t^{n}_i) \\
                  (K(\partial_t)p_1)(t^{n}_i) \\
                  \vdots \\
                   (K(\partial_t)p_{k_1+k_2+1})(t^{n}_i) \\
                \end{array}
              \right),
\\
\mathbf{V}:=&\left(
      \begin{array}{cccc}
        p_0(t^0_0) & p_0(t^0_1) & \cdots & p_0(t^0_{k_1+k_2+1}) \\
        p_1(t^0_0) & p_1(t^0_1) & \cdots & p_1(t^0_{k_1+k_2+1})\\
        \vdots & \vdots &  & \vdots \\
        p_{k_1+k_2+1}(t^0_0) & p_{k_1+k_2+1}(t^0_1) & \cdots & p_{k_1+k_2+1}(t^0_{k_1+k_2+1}) \\
      \end{array}
    \right),
\end{align*}
Eq. \eqref{Modeq} reduces to
\begin{align*}
\mathbf{V}\mathbf{w}_i=\mathbf{K}_i-
\left(
  \begin{array}{c}
    \mathbf{e}_i^T\sum_{j=0}^{n}\mathbf{W}_{n-j}\mathbf{p}_0^j  \\
    \vdots  \\
     \mathbf{e}_i^T\sum_{j=0}^{n}\mathbf{W}_{n-j}\mathbf{p}_{k_1+k_2+1}^j \\
  \end{array}
\right).
\end{align*}
Following a similar derivation as for Eq. \eqref{boundforweight},
we obtain
\begin{align*}
\sum_{j=0}^{n}\|\mathbf{W}_{n-j}\|\leq C ~~\mathrm{for}~~\mu<0~~\mathrm{as}~~
h\rightarrow 0.
\end{align*}
Since
$ (K(\partial_t)p_l)(t^{n}_i)=\mathcal{O}(h^{l})$
and $ p_l(t^0_j)=\mathcal{O}(h^{l}),$
it follows that  $\|\mathbf{w}_i\|=\mathcal{O}(1).$
Then we define the  modified BGACQ (MBGACQ) by
\begin{align*}
(K(\bar{\partial}_t^h)g)(t^n_{i}):=
(K(\partial_t^h)g)(t^n_{i})
+\mathbf{w}_i^T\hat{\mathbf{G}}_0,
\end{align*}
where $\hat{\mathbf{G}}_0:=(g(t^0_0)~,~\cdots~,~g(t^0_{k_1+k_2+1}))^T.$
We can now express the quadrature error as
\begin{align*}
(K(\bar{\partial}_t^h)g)(t^n_{i})-(K(\partial_t)g)(t^n_{i})
=&(K(\bar{\partial}_t^h)\tilde{p})(t^n_{i})-(K(\partial_t)\tilde{p})(t^n_{i})
\\
=&(K(\partial_t^h)\tilde{p})(t^n_{i})-(K(\partial_t)\tilde{p})(t^n_{i})
+\mathbf{w}_i^T\hat{\mathbf{p}}_0,
\end{align*}
where $\hat{\mathbf{p}}_0:=(\tilde{p}(t^0_0)~,~\cdots~,~\tilde{p}(t^0_{k_1+k_2+1}))^T.$
Because $\|\hat{\mathbf{p}}_0\|=\mathcal{O}(h^{k_1+k_2+2})$ as $h\rightarrow 0$
and $\tilde{p}(t)$ satisfies the initial condition in Theorem \ref{mainthm},
both $(K(\partial_t^h)\tilde{p})(t^n_{i})-(K(\partial_t)\tilde{p})(t^n_{i})$
and $\mathbf{w}_i^T\hat{\mathbf{p}}_0$ are $\mathcal{O}(h^{k_1+k_2+2}).$
This implies that MBGACQ recovers the full convergence order.
For the case of $\mu\geq 0,$
the initial condition $g(0)=\cdots=g^{(r)}(0)$
ensures the well-posedness of CI,
and the convergence order of MBGACQ can
be derived similarly to the case of  $\mu<0.$

\section{Numerical Experiments}

In this section, we present several numerical examples to validate the theoretical properties of BGACQ for calculation of CI \eqref{CI}.
For this purpose, we compute
the maximum of  quadrature errors,
$\|\mathbf{Err}^{\mathrm{CQ}}_{n,N}\|,$
in the $n-$th block
of BGACQ.
Furthermore,
we numerically compute the convergence order, denoted by ``$\mathrm{Order}$'', for the errors in each block.
The formula used is
\begin{align*}
\mathrm{Order}:=
\left|\frac{\log\|\mathbf{Err}^{\mathrm{CQ}}_{n,N_1}\|-
\log\|\mathbf{Err}^{\mathrm{CQ}}_{n,N_2}\|}{\log N_1-\log N_2}\right|.
\end{align*}
The first two examples demonstrate the convergence rates established in Theorems \ref{mainthm} and \ref{mainthm2}.
The third example assess the stability of BGACQ by numerically solving a class of integral equations with rapidly varying and conservative  solutions.
Finally, we consider the calculation of a family of oscillatory integrals.

\begin{example}\label{Ex1}
We compute  CI \eqref{CI} using BGACQ$_{0,1}^3,$
where the Laplace transform of its kernel is
\begin{align*}
K_{\mu}(\lambda)=\frac{\lambda^{\mu}}{1-\mathrm{e}^{-\lambda}}.
\end{align*}
The expression for this family of CIs can be derived as follows,
\begin{align*}
(K_{\mu}(\partial_t)g)(t)=\sum_{j=0}^{\infty}(\partial_t^{\mu} g)(t-j)~~
\mathrm{with}~~(\partial_t^{\mu} g)(t)=\frac{1}{\Gamma(\mu)}\int_0^t(t-s)^{-\mu-1}
g(s)ds.
\end{align*}
We select $g(t)=\mathrm{e}^{-0.4t}\sin^6 t$
to satisfy the initial conditions in Theorems \ref{mainthm} and \ref{mainthm2} for the case where $\mu\leq 2.$
Due to the difficulty of directly computing the exact value, we employ the value obtained using BGACQ$_{0,1}^3$ with $2^{10}$ blocks as a reference value.
\end{example}

Table \ref{Ex1T1} presents the computed results for the case of $\mu<0.$
As expected from Theorem \ref{mainthm}, the observed convergence order is approximately $3,$ which aligns well with the theoretical findings.
Table \ref{Ex1T2} summarizes the numerical results for $\mu\geq 0.$
We observe a computed convergence order of  $3$ for BGACQ$_{0,1}^3$
when $\mu=0$ or $0.8.$
This aligns with the theoretical prediction in Theorem \ref{mainthm2}.
For the case where $\mu=1.8,$
the computed order is $4-\mu=2.2,$
again consistent with the theoretical estimation in Theorem \ref{mainthm2}.

\begin{table}
\tabcolsep 0pt \caption{Maximum norms, $\|\mathbf{Err}_{N-1,N}^{\mathrm{CQ}}\|,$ of absolute errors of BGACQ$_{0,1}^{3}$
applied to Example \ref{Ex1} with $\mu<0.$}\label{Ex1T1}
\vspace*{-10pt}
\begin{center}
\def\temptablewidth{1\textwidth}
{\rule{\temptablewidth}{1pt}}
\begin{tabular*}{\temptablewidth}{@{\extracolsep{\fill}}ccccccc} \hline
               \multirow{2}*{$N$}& \multicolumn{2}{c}{$\mu = -0.2$} &
\multicolumn{2}{c}{$\mu = -0.8$}
& \multicolumn{2}{c}{$\mu = -1.8$}
\\
             & $\|\mathbf{Err}_{N-1,N}^{\mathrm{CQ}}\|$  & $\mathrm{Order}$
              & $\|\mathbf{Err}_{N-1,N}^{\mathrm{CQ}}\|$  & $\mathrm{Order}$
             & $\|\mathbf{Err}_{N-1,N}^{\mathrm{CQ}}\|$  & $\mathrm{Order}$
            \\ \hline
    $2^{3} $     &$2.8\times 10^{-4}$ &	--
    &$1.1\times 10^{-4}$ &	--
    &$9.8\times 10^{-6}$ &	-- \\
    $2^{4} $     &$3.1\times 10^{-5}$ &	3.2
    &$1.3\times 10^{-5}$ &	3.1
    &$5.3\times 10^{-7}$ &	4.2\\
    $2^{5}$     &$3.8\times 10^{-6}$ &	3.1
    &$1.5\times 10^{-6}$ &	3.1
    &$5.0\times 10^{-8}$ &	3.4\\
    $2^{6} $    &$4.6\times 10^{-7}$ &	3.0
    &$1.8\times 10^{-7}$ &	3.0
    &$5.3\times 10^{-9}$ &	3.2\\
    $2^{7}$     &$5.7\times 10^{-8}$ &	3.0
    &$2.2\times 10^{-8}$ &	3.0
    &$6.2\times 10^{-10}$ &	3.1 \\
    $2^{8}$     &$7.0\times 10^{-9}$ &	3.0
    &$2.7\times 10^{-9}$ &	3.0
    &$7.9\times 10^{-11}$ &	3.0 \\
     \hline
       \end{tabular*}
       {\rule{\temptablewidth}{1pt}}
       \end{center}
       \end{table}

\begin{table}
\tabcolsep 0pt \caption{Maximum norms, $\|\mathbf{Err}_{N-1,N}^{\mathrm{CQ}}\|,$ of absolute errors of BGACQ$_{0,1}^{3}$
applied to Example \ref{Ex1} with $\mu\geq0.$}\label{Ex1T2}
\vspace*{-10pt}
\begin{center}
\def\temptablewidth{1\textwidth}
{\rule{\temptablewidth}{1pt}}
\begin{tabular*}{\temptablewidth}{@{\extracolsep{\fill}}ccccccc} \hline
               \multirow{2}*{$N$}& \multicolumn{2}{c}{$\mu = 0$} &
\multicolumn{2}{c}{$\mu = 0.8$}
& \multicolumn{2}{c}{$\mu = 1.8$}
\\
             & $\|\mathbf{Err}_{N-1,N}^{\mathrm{CQ}}\|$  & $\mathrm{Order}$
              & $\|\mathbf{Err}_{N-1,N}^{\mathrm{CQ}}\|$  & $\mathrm{Order}$
             & $\|\mathbf{Err}_{N-1,N}^{\mathrm{CQ}}\|$  & $\mathrm{Order}$
            \\ \hline
    $2^{3}  $ &$3.6\times 10^{-4}$ &--
    &	$1.2\times 10^{-3}$ &--
    &$4.3\times 10^{-2}$ &	--  \\
    $2^{4} $  &$3.7\times 10^{-5}$ &3.3
    &$1.2\times 10^{-4}$ &	3.3
    &$5.6\times 10^{-3}$ &	2.9 \\
    $2^{5} $     &$4.3\times 10^{-6}$ &3.1
    &$1.6\times 10^{-5}$ &	2.8
    &$9.6\times 10^{-4}$ &	2.6 \\
    $2^{6} $     &$5.2\times 10^{-7}$ &	3.0
    &$2.1\times 10^{-6}$ &2.9
    &$1.9\times 10^{-4}$ &	2.4 \\
    $2^{7}$     &$6.4\times 10^{-8}$ &3.0
    &$2.6\times 10^{-7}$ &	3.0
    &$4.3\times 10^{-5}$ &	2.1 \\
    $2^{8} $    &$7.9\times 10^{-9}$ &3.0
    &$3.2\times 10^{-8}$ &	3.1
    &$9.1\times 10^{-6}$ &	2.2 \\
     \hline
       \end{tabular*}
       {\rule{\temptablewidth}{1pt}}
       \end{center}
       \end{table}

CQs have proven valuable tools for computing fractional derivatives and integrals.
The following example explores the application of the proposed BGACQ to the calculation of fractional integrals, demonstrating its effectiveness in this specific area.

\begin{example}
Consider calculation of CI in the form of
\begin{align}
\label{CI2ex}
\frac{1}{\Gamma(\alpha)}\int_0^t(t-s)^{\alpha-1}(\sin (s)+1)\mathrm{e}^{0.8s}ds~~\mathrm{with}~~t\in [0,5]~~\mathrm{and}~~\alpha\in (0,1).
\end{align}
\end{example}

Since the function $(\sin (s)+1)\mathrm{e}^{0.8s}$ does not vanish at $s=0,$
we utilize MBGACQ$_{0,1}^3,$ MBGACQ$_{0,2}^4,$
and MBGACQ$_{1,2}^5$ to compute CI \eqref{CI2ex}
and  verify the predicted convergence order.
As shown in Tables \ref{Ex2T1} ($\alpha=0.5$)
and \ref{Ex2T3} ($\alpha=0.9$),
the computed convergence orders are approximately
$3,4,5,$ respectively,
which aligns with the theoretical expectations established in Theorem \ref{mainthm}.
To elucidate the key difference between BGACQ$_{1,2}^5$
and MBGACQ$_{1,2}^5,$ Figure \ref{Ex2F1}
compares their absolute errors on the total grid for varying values of $N.$
We observe that the correction term incorporated in  MBGACQ$_{1,2}^5$
significantly reduces the numerical errors around $t=0.$

\begin{table}
\tabcolsep 0pt \caption{Maximum norms, $\|\mathbf{Err}_{N-1,N}^{\mathrm{CQ}}\|,$ of absolute errors computed by  MBGACQ$_{k_1,k_2}^{m^*}$ for CI \eqref{CI2ex} with $\alpha=0.5$.}\label{Ex2T1}
\vspace*{-10pt}
\begin{center}
\def\temptablewidth{1\textwidth}
{\rule{\temptablewidth}{1pt}}
\begin{tabular*}{\temptablewidth}{@{\extracolsep{\fill}}ccccccc} \hline
               \multirow{2}*{$N$}& \multicolumn{2}{c}{$(k_1,k_2)=(0,1)$} &
\multicolumn{2}{c}{$(k_1,k_2)=(0,2)$}
& \multicolumn{2}{c}{$(k_1,k_2)=(1,2)$}
\\
             & $\|\mathbf{Err}_{N-1,N}^{\mathrm{CQ}}\|$  & $\mathrm{Order}$
              & $\|\mathbf{Err}_{N-1,N}^{\mathrm{CQ}}\|$  & $\mathrm{Order}$
             & $\|\mathbf{Err}_{N-1,N}^{\mathrm{CQ}}\|$  & $\mathrm{Order}$
            \\ \hline
   8      &$1.2\times 10^{-2}$ &	--
           &$7.3\times 10^{-4}$&	--
         &$1.3\times 10^{-6}$ &	-- \\
    24     & $3.4\times 10^{-4}$ &	3.2
           &$8.9\times 10^{-6}$ &	4.0
        & $1.1\times 10^{-8}$ &	4.4\\
    40      &$6.7\times 10^{-5}$ &	3.2
           &$1.1\times 10^{-6}$ &	4.0
         &$8.0\times 10^{-10}$ &	5.1 \\
    56      &$2.3\times 10^{-5}$ &	3.1
           &$3.0\times 10^{-7}$ &	4.0
         &$1.4\times 10^{-10}$ &	5.1 \\
    72      &$1.1\times 10^{-5}$ &	3.1
           &$1.1\times 10^{-7}$ &	4.0
         &$3.8\times 10^{-11}$ &	5.2 \\
     \hline
       \end{tabular*}
       {\rule{\temptablewidth}{1pt}}
       \end{center}
       \end{table}

\begin{table}
\tabcolsep 0pt \caption{Maximum norms, $\|\mathbf{Err}_{N-1,N}^{\mathrm{CQ}}\|,$ of absolute errors computed by  MBGACQ$_{k_1,k_2}^{m^*}$ for CI \eqref{CI2ex} with $\alpha=0.9$.}\label{Ex2T3}
\vspace*{-10pt}
\begin{center}
\def\temptablewidth{1\textwidth}
{\rule{\temptablewidth}{1pt}}
\begin{tabular*}{\temptablewidth}{@{\extracolsep{\fill}}ccccccc} \hline
               \multirow{2}*{$N$}& \multicolumn{2}{c}{$(k_1,k_2)=(0,1)$} &
\multicolumn{2}{c}{$(k_1,k_2)=(0,2)$}
& \multicolumn{2}{c}{$(k_1,k_2)=(1,2)$}
\\
             & $\|\mathbf{Err}_{N-1,N}^{\mathrm{CQ}}\|$  & $\mathrm{Order}$
              & $\|\mathbf{Err}_{N-1,N}^{\mathrm{CQ}}\|$  & $\mathrm{Order}$
             & $\|\mathbf{Err}_{N-1,N}^{\mathrm{CQ}}\|$  & $\mathrm{Order}$
            \\ \hline
    8   & $1.5\times 10^{-2}$ &	--
    &$1.3\times 10^{-3}$ &	--
    & $1.7\times 10^{-6}$ &	--  \\
    24  &$4.3\times 10^{-4}$ &3.2
     &$1.5\times 10^{-5}$ &	4.0
     &$1.7\times 10^{-8}$ &	4.2 \\
    40   &$8.7\times 10^{-5}$ &3.1
    &$2.0\times 10^{-6}$ &	4.0
    & $1.5\times 10^{-9}$ &	4.8\\
    56  & $3.0\times 10^{-5}$ & 3.1
    &$5.1\times 10^{-7}$ &	4.0
    & $2.9\times 10^{-10}$ & 4.9 \\
    72  &$1.4\times 10^{-5}$ &	3.1
    &$1.9\times 10^{-7}$ &	4.0
     & $8.0\times 10^{-11}$ &	5.1 \\
     \hline
       \end{tabular*}
       {\rule{\temptablewidth}{1pt}}
       \end{center}
       \end{table}

\begin{figure}
\begin{center}
\includegraphics[width=14cm,height=6cm]{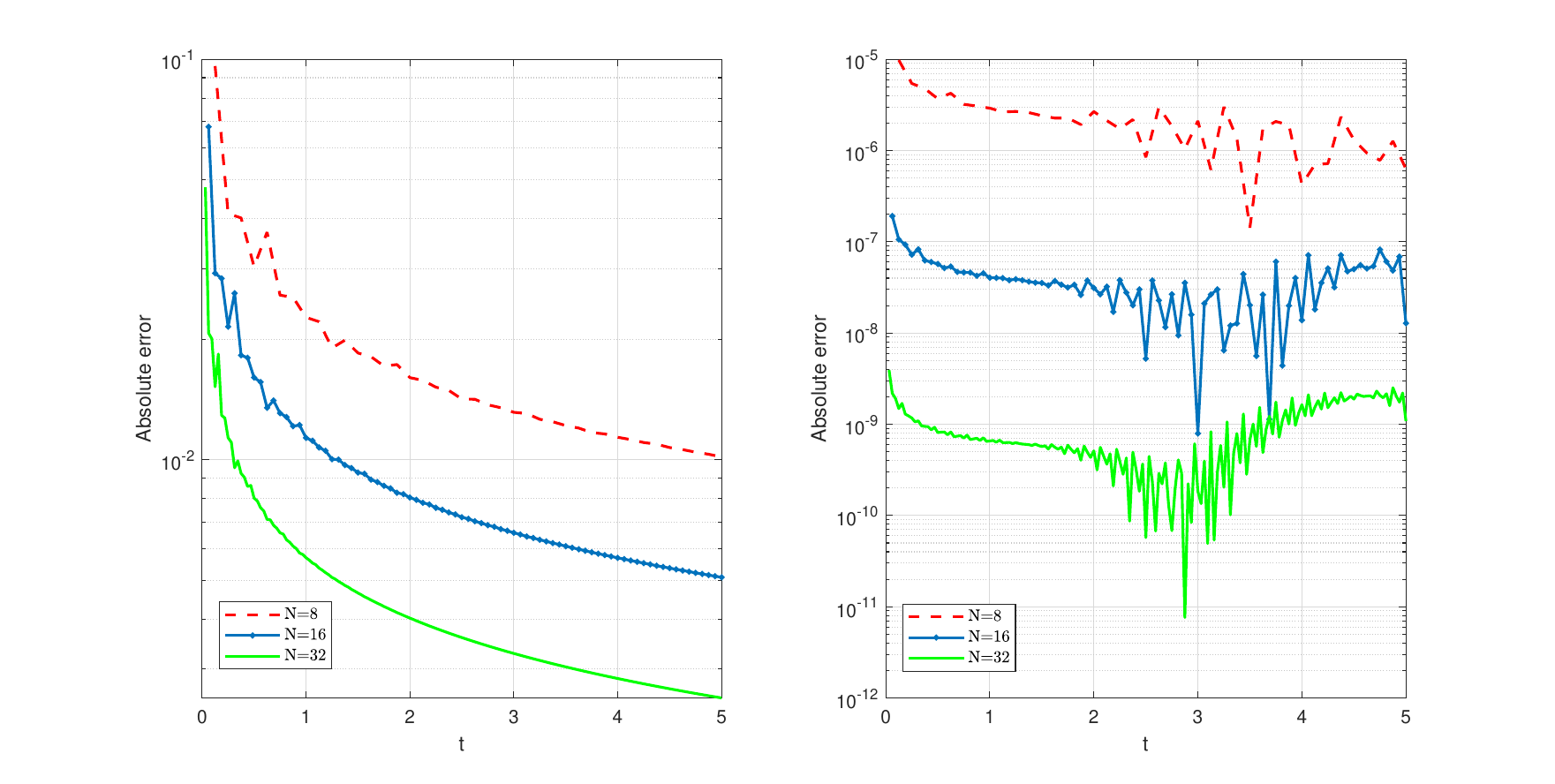}
\caption{Comparison between pointwise errors of BGACQ$_{1,2}^5$ (left)
and MBGACQ$_{1,2}^5$ (right) for CI \eqref{CI2ex} with
$\alpha=0.5$ and various $N.$}\label{Ex2F1}
\end{center}
\end{figure}

Now let us test the stability of the proposed CQ
by solving an integral equation with a fast varying
and conservative solution.

\begin{example}
Solve the integral equation
\begin{align}
\label{IEex}
\int_0^tk(t-s)u(s)ds=g(t)~~\mathrm{with}~~t\in [0,4].
\end{align}
Here the Laplace transform of $k(t)$ is selected to be
$K(\lambda)=1-\mathrm{e}^{-\lambda}$
and $g(t)=\mathrm{e}^{-100(t-0.5)^2}.$
Its exact solution is $u(t)=\sum_{j=0}^3g(t-j)$
(see \cite{banjai2022integral}).
Since direct calculations indicate that both
$u(t)$
and its derivatives  are negligible at $t=0,$
the correction term is omitted in the implementation of CQs for this case.
\end{example}

For comparison purposes, we employ various CQs
to solve Eq. \eqref{IEex}, including
BDF2CQ, TRCQ,
BGACQ$_{0,1}^{3},$ BGACQ$_{0,2}^{4},$ and BGACQ$_{1,2}^{5},$
$k-$stage Lobatto IIIC RKCQ (LRKCQ$_k$)
and $k-$stage Gauss RKCQ (GRKCQ$_k$) with $k=3,4,5.$
All approaches utilize a  total of 120 quadrature nodes.
Consequently,
the $3-$stage, $4-$stage and
$5-$stage methods  use
$40,$ $30$ and $24$ blocks, respectively.
Computed results are shown in Figure \ref{Ex4F1}.
The first subfigure of Figure \ref{Ex4F1} demonstrates that BDF2CQ suffers from significant departure from the true solution due to its strong dissipative nature. While TRCQ is inherently conservative, its accuracy is also compromised in this case by an insufficiently small time step size.
While BGACQ exhibits a slower convergence rate compared to LRKCQ and GRKCQ with the same stage,
the second and third subfigures in Figure \ref{Ex4F1} reveal that BGACQ's damping errors lie between LRKCQ and GRKCQ,
suggesting GRKCQ's efficiency across the compared schemes.
Notably, with increasing convergence order of CQs (see the fourth subfigure), the damping becomes negligible.
Our numerical experiments have also identified several conservative BGACQs,
such as BGACQ$_{1,1}^{4},$ that achieve numerical accuracy comparable to the conservative GRKCQ$_4.$
Figure \ref{Ex4F2} compares  BGACQ$_{1,1}^{4}$ with GRKCQ$_4,$
demonstrating that the difference in numerical results
computed by these methods is insignificant, while  a significant damping effect is observed for BGACQ$_{0,2}^{4}$ in the third subfigure of Figure \ref{Ex4F1}.
However, analyzing the convergence properties of these conservative BGACQs remains an open challenge for future research.

\begin{figure}
\begin{center}
\includegraphics[width=14cm,height=6.5cm]{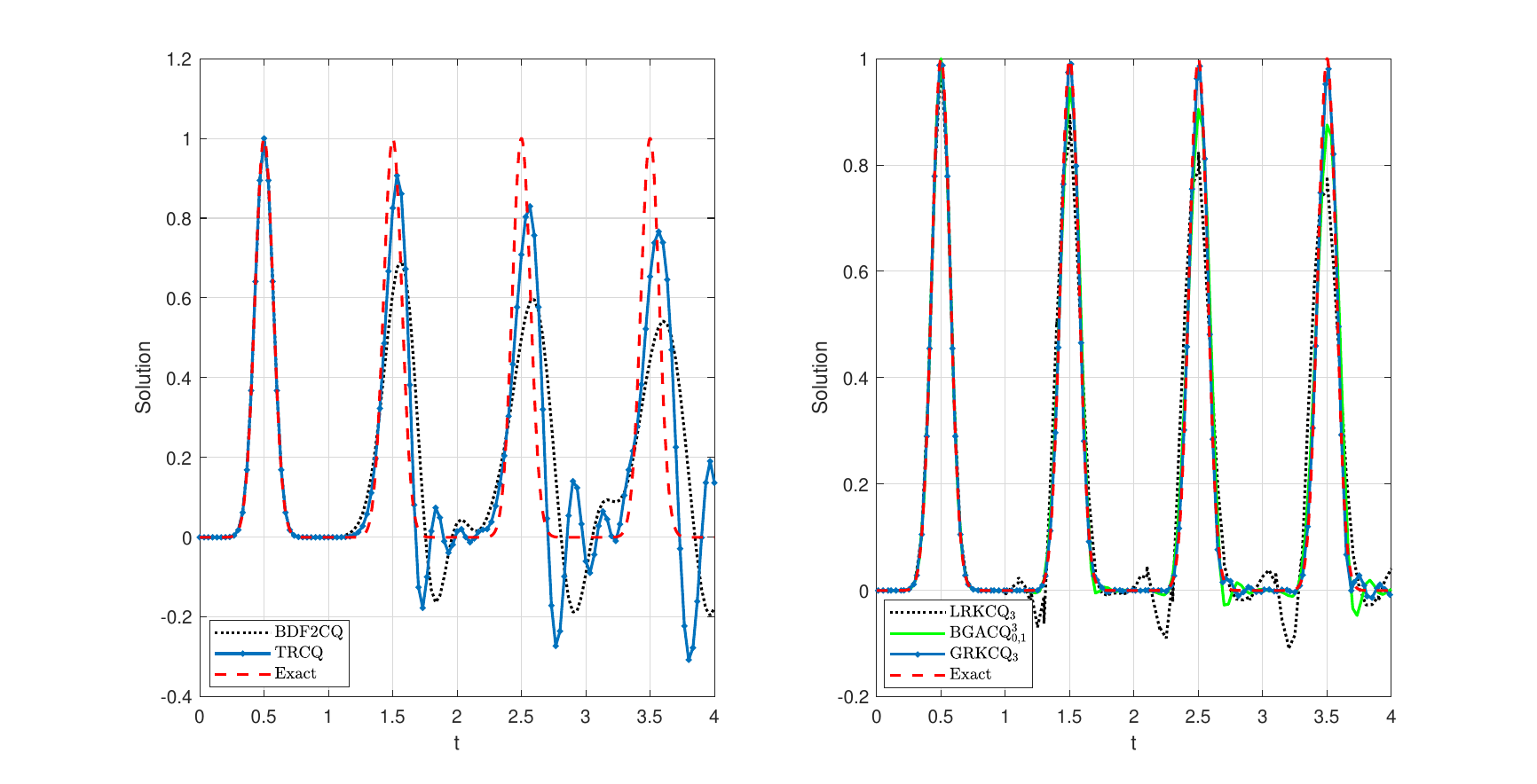}
\includegraphics[width=14cm,height=6.5cm]{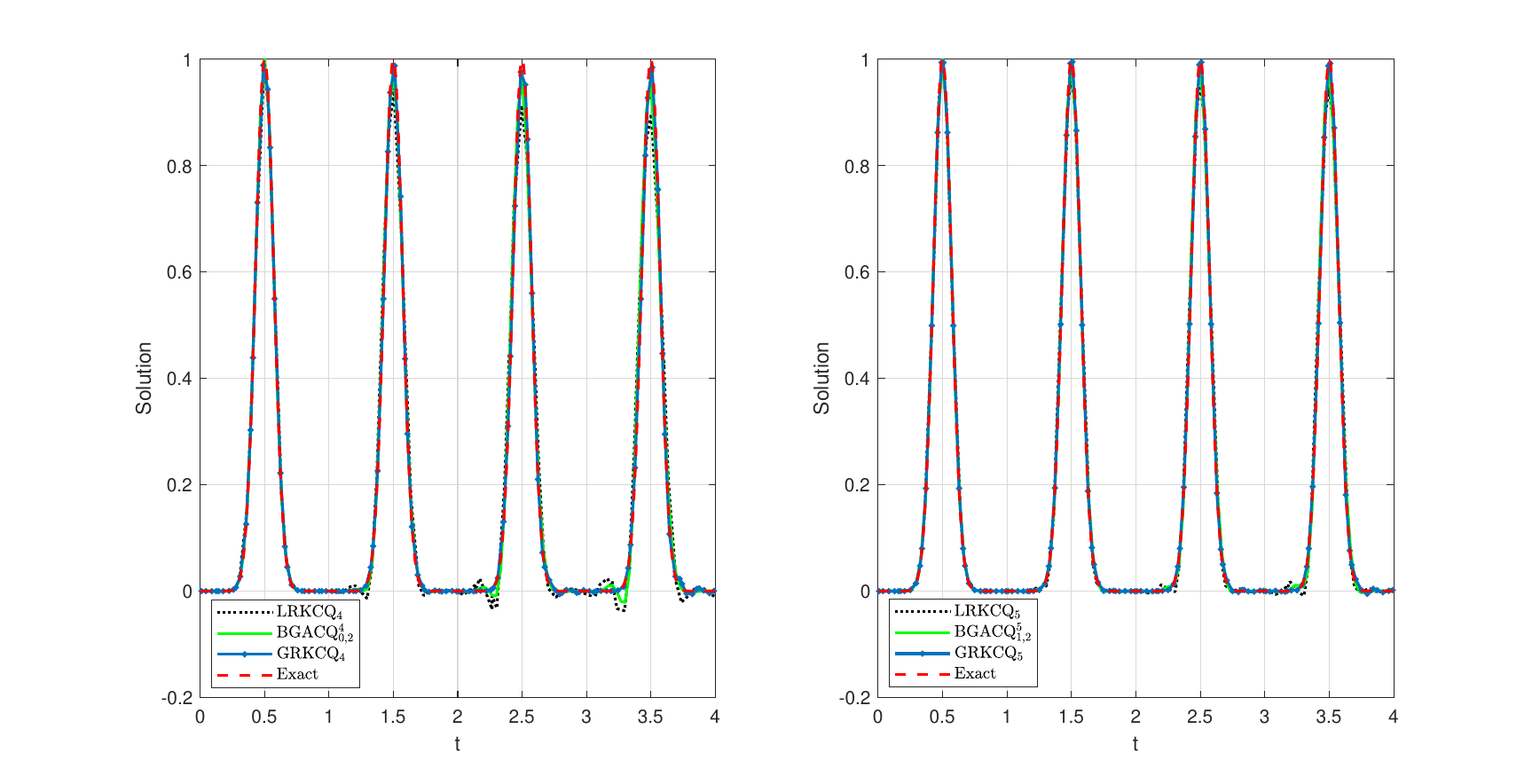}
\caption{Comparison of numerical solutions computed by CQs for Eq. \eqref{IEex} with 120 quadrature nodes.}\label{Ex4F1}
\end{center}
\end{figure}

\begin{figure}
\begin{center}
\includegraphics[width=14cm,height=6.5cm]{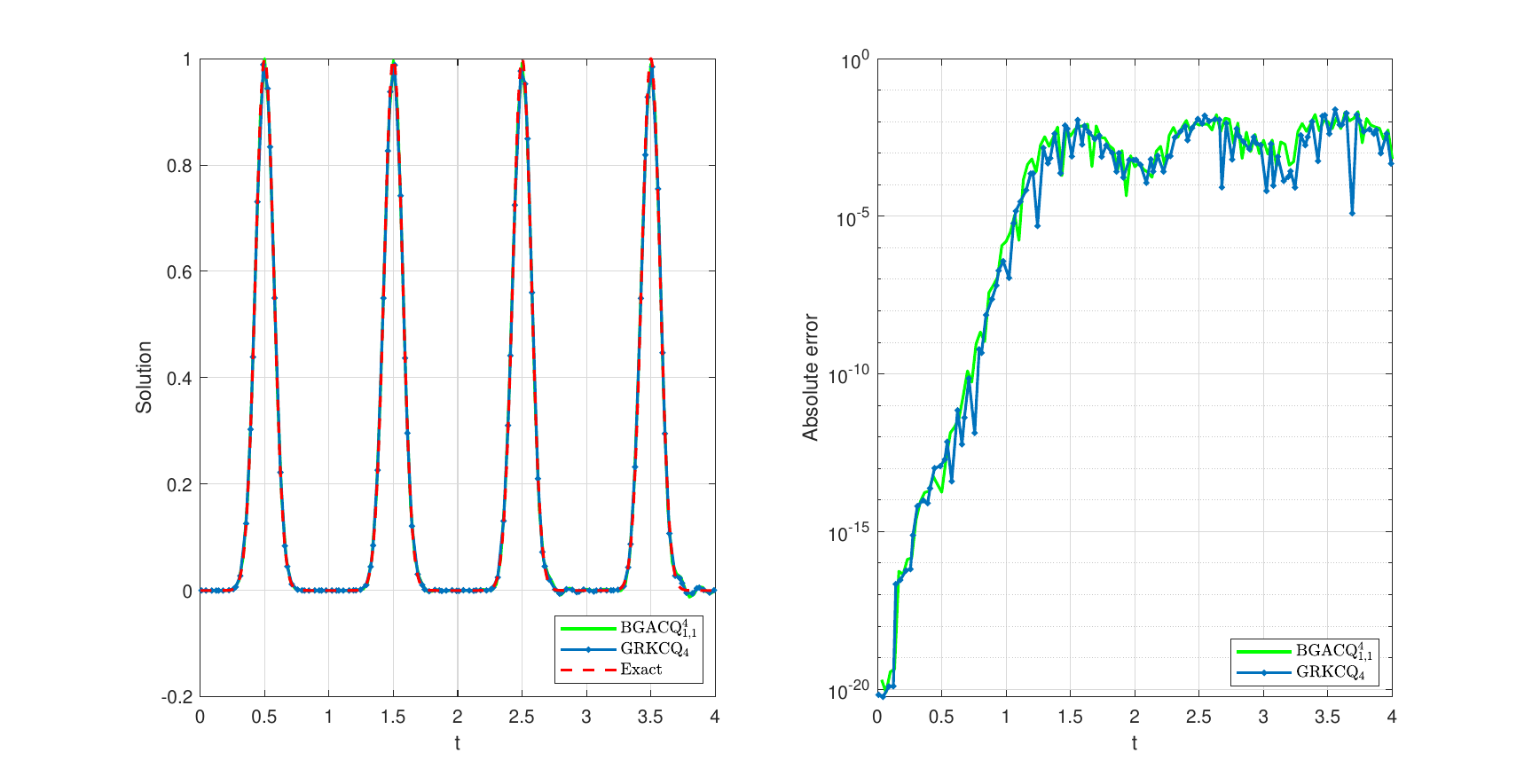}
\caption{Comparison of  numerical solutions computed by  BGACQ$_{1,1}^{4}$ and GRKCQ$_4$ for Eq. \eqref{IEex} with 120 quadrature nodes.}\label{Ex4F2}
\end{center}
\end{figure}

While conservative CQs offer advantages in certain scenarios, dissipative CQs can demonstrate superior performance for specific problems.
Here, we illustrate the benefits of dissipative CQs by applying them to the following problem.

\begin{example}
Consider calculation of the oscillatory integral with a
fast decaying integrand
\begin{align}\label{CI5ex}
\int_0^2J_0(\omega(2-s))\mathrm{e}^{-10s}\frac{s^6}{1+25s^2}ds.
\end{align}
Here $J_0$ denotes the first-kind Bessel function of order $0.$
Its Laplace transform is $(\omega^2+\lambda^2)^{-1/2},$
which is singular at $\lambda=\pm \mathrm{i}\omega.$
As $\omega\rightarrow \infty,$
the problem becomes increasingly hyperbolic.
\end{example}

We employ the $4-$stage methods, GRKCQ$_4,$
BGACQ$_{1,1}^4$ and BGACQ$_{0,2}^4,$
to compute CI \eqref{CI5ex}.
The relative errors, presented in Figure~\ref{Ex5F1}, demonstrate that the dissipative CQ (BGACQ$_{0,2}^4$) consistently outperforms the two conservative methods
regardless of an increase in either the quadrature nodes or the oscillation parameter $\omega.$
This superiority can be attributed, in part, to the faster decay of the dissipative CQ's stability function near the imaginary axis compared to the conservative CQs.
This faster decay allows the dissipative CQ to significantly mitigate rounding errors.

\begin{figure}
\begin{center}
\includegraphics[width=14cm,height=6.5cm]{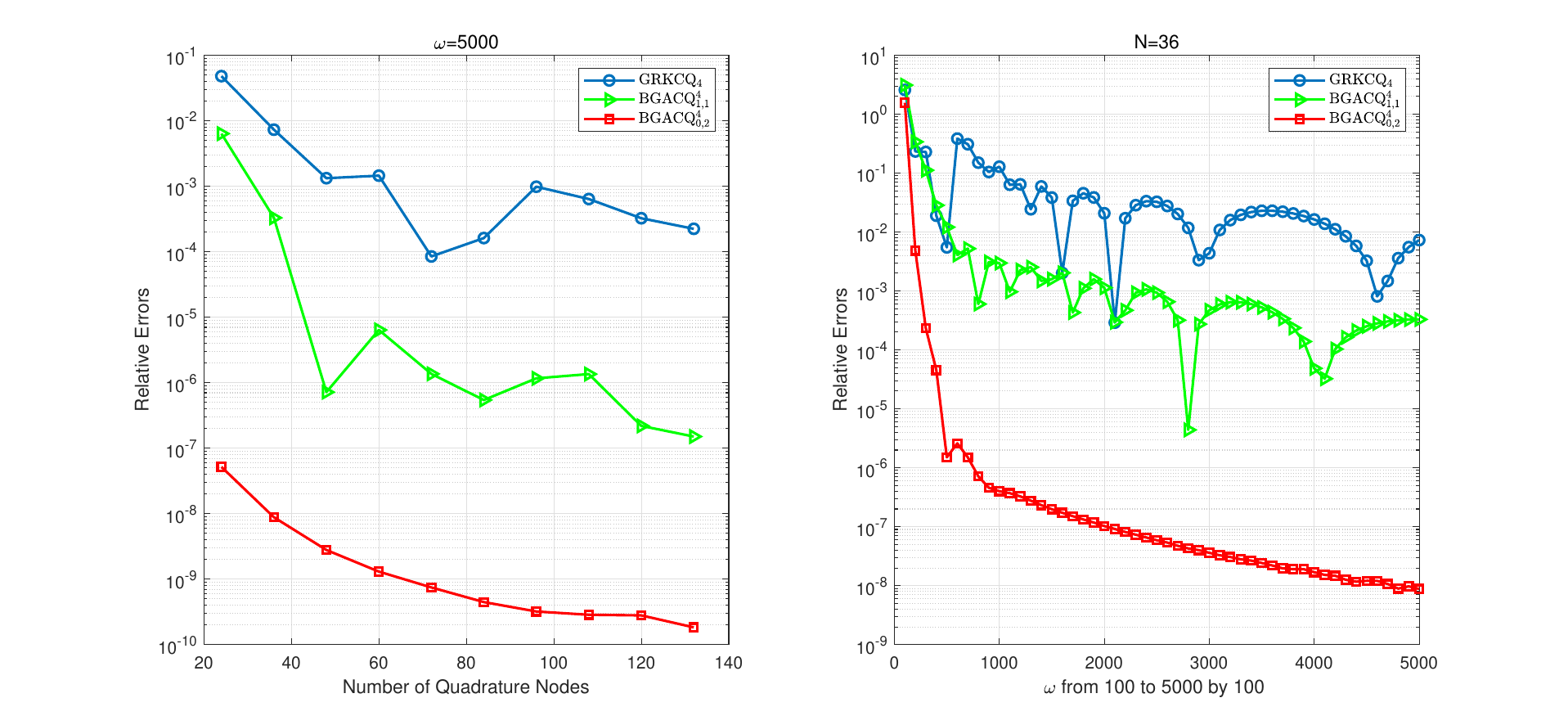}
\caption{Comparison of  dissipative and conservative CQs for CI \eqref{CI5ex}. The left part shows the relative errors as the quadrature nodes increase.
The right part shows the relative errors as $\omega$ increases.}\label{Ex5F1}
\end{center}
\end{figure}

\section{Conclusions}

This paper explores a family of BGACQs for calculation of CI \eqref{CI} with
a hyperbolic kernel.
These quadrature rules can be understood as modifications of  LMCQs or RKCQs.
Notably, BGACQs are implemented on the uniform grid and offer the advantage of adjustable convergence rates without requiring additional grid points.
Furthermore, when coupled with an ODE solver satisfying Assumption~\ref{ASS},
BGACQs demonstrate high-order accuracy and effectiveness in solving a wide range of hyperbolic problems.
Their inherent flexibility suggests significant potential for diverse numerical applications, including fractional differential equations, time-domain boundary integral equations, Volterra integral equations, and potentially many more.

While this paper focuses on BGACQs with small block sizes, higher-order schemes or capturing global information necessitate larger block sizes, leading to a significant increase in computational cost.
Fast  algorithms for calculation of quadrature weights, as explored in \cite{lopez2016generalized}, offer a potential remedy.
This challenge is also crucial for developing a varying-stepsize version of BGACQs.

\section*{Acknowledgements}

We express our sincere gratitude to  anonymous reviewers for their insightful comments, which significantly enhanced the quality of our paper.

\end{document}